\def\ps@pprintTitle{%
 \let\@oddhead\@empty
 \let\@evenhead\@empty
 \def\@oddfoot{}%
 \let\@evenfoot\@oddfoot}
\newcommand\scalemath[2]{\scalebox{#1}{\mbox{\ensuremath{\displaystyle #2}}}}
\newcommand{\ignore}[1]{}
\newcommand{\eqm}{\begin{eqnarray}}
\newcommand{\enm}{\end{eqnarray}}
\newcommand{\eql}[1]{\begin{equation}\label{#1}}
\newcommand{\eqml}[1]{\eql{#1}\begin{array}{rcl}}
\newcommand{\enml}{\end{array}\end{equation}}
\newcommand{\eqmno}{\begin{eqnarray*}}
\newcommand{\enmno}{\end{eqnarray*}}
\newtheorem{theorem}{Theorem}
\newtheorem{proposition}{Proposition}
\newtheorem{lemma}{Lemma}
\newtheorem{corollary}{Corollary}
\def\cP{\mathcal{P}}
\def\cD{\mathcal{D}}
\def\cR{\mathcal{R}}
\def\bi{\begin{itemize}}
\def\ei{\end{itemize}}
\journal{}
\def\@author#1{\g@addto@macro\elsauthors{\normalsize%
    \def\baselinestretch{1}%
    \upshape\authorsep#1\unskip\textsuperscript{%
      \ifx\@fnmark\@empty\else\unskip\sep\@fnmark\let\sep=,\fi
      \ifx\@corref\@empty\else\unskip\sep\@corref\let\sep=,\fi
      }%
    \def\authorsep{\unskip,\space}%
    \global\let\@fnmark\@empty
    \global\let\@corref\@empty  
    \global\let\sep\@empty}%
    \@eadauthor={#1}
}
\begin{document}

\begin{frontmatter}

\title{{\bf Cell-average WENO with progressive order of accuracy close to discontinuities with applications to signal processing}}

\author{Sergio Amat\fnref{fn1}}\ead{sergio.amat@upct.es}
\address{Department of Applied Mathematics and Statistics, Universidad Polit\'ecnica de Cartagena (Spain)}
\fntext[fn1]{This work was funded by project 20928/PI/18 (Proyecto financiado por la Comunidad Aut\'onoma de la Regi\'on de Murcia a trav\'es de la convocatoria de Ayudas a proyectos para el desarrollo de investigaci\'on cient\'ifica y t\'ecnica por grupos competitivos, incluida en el Programa Regional de Fomento de la Investigaci\'on Cient\'ifica y T\'ecnica (Plan de Actuaci\'on 2018) de la Fundaci\'on S\'eneca-Agencia de Ciencia y Tecnolog\'ia de la Regi\'on de Murcia) and through the national research project (MINECO/FEDER) and PID2019-108336GB-I00.}

\author{Juan Ruiz\corref{cor1}\fnref{fn1}} \ead{juan.ruiz@upct.es}
\address{Department of Applied Mathematics and Statistics, Universidad Polit\'ecnica de Cartagena (Spain)}

\author{Chi-Wang Shu \fnref{fn2}} \ead{Chi-Wang\_Shu@brown.edu}
\address{Division of Applied Mathematics, Brown University (USA)}
\fntext[fn2]{The author has been supported through the NSF grant DMS-2010107 and AFOSR grant FA9550-20-1-0055.}

\author{Dionisio F. Y\'a\~nez \fnref{fn3}} \ead{dionisio.yanez@uv.es}
\address{Department of Mathematics, Universidad de Valencia (Spain)}
\fntext[fn3]{The author has been supported through the Spanish MINECO project MTM2017-83942-P.}

\cortext[cor1]{Corresponding author}

\nobibliography{bibliografia_bibdesk_no_doi_url}

\begin{abstract}
In this paper we translate to the cell-average setting the algorithm for the point-value discretization presented in \bibentry{articulopaper}. This new strategy tries to improve the results of WENO-($2r-1$) algorithm close to the singularities, resulting in an optimal order of accuracy at these zones. The main idea is to modify the optimal weights so that they have a  nonlinear expression that depends on the position of the discontinuities. In this paper we study the application of the new algorithm to signal processing using Harten's multiresolution. Several numerical experiments are performed in order to confirm the theoretical results obtained.
\end{abstract}

\begin{keyword}
WENO, cell-average, new optimal weights, multiresolution schemes, improved adaption to discontinuities, signal processing.

{\bf AMS(MOS) subject classifications.} 65D05, 65D17, 65M06, 65N06

\end{keyword}

\end{frontmatter}

\section{Introduction and review: Harten's multiresolution for cell-average setting}

Multiresolution and, in particular Harten's multiresolution (MR), has been used extensively for academic and industrial a\-pplications in the past years. Some examples are signal and image processing, compression or denoising (see, e.g. \cite{AD99,Har96}). There exists an interesting connection between wavelets theory, subdivision schemes and MR (see \cite{cohen}). While the approach presented in wavelets theory is based on functional analysis,
MR allows to analyze and study the problem from the point of view of function approximation and interpolation. The field is also connected to subdivision schemes, that are usually applied in computer aided design, and that can be modified to create a MR scheme under some theoretical conditions.

Harten's MR consists on four operators: let us suppose that $l$ is the level of resolution and that $\{x_j^l\}_{j=0}^{2^l}$  are equally spaced points of the interval $[0,1]$ with $x_j^l=j/2^l$, $h_l=1/2^l$. We interpret the data as the discretization of an integrable function, $L^1([0,1])$, through the discretization operator:
$$ \cD_l :L^1([0,1]) \to V^l\subset \mathbb{R}^l,$$
where $V^l$ is a discrete space. In the literature, several discretization operators can be found (see, e.g. \cite{AD99,Har96,10.2307/25663094,ardohar1}). In this paper, we consider our data as the averages of a function in the intervals $I_j^l=[(j-1)\cdot h^l,j\cdot h^l]$, i.e. each component of $\bar f^l$ is defined as:
\begin{equation}\label{discretiza}
\bar f^l_j:=(\cD_l f)_j=h^{-1}_l\int_{x_{j-1}^l}^{x_{j}^l} f(x) dx,\quad j=1,\hdots, 2^l.
\end{equation}
Once we have chosen the discretization, in order to move between two consecutive levels, two operators are defined: decimation and prediction. The decimation operator,
$$\cD^{l-1}_{l}:V^{l}\to V^{l-1}, $$
allows to go from a finer resolution to a coarser resolution and is a linear operator. In our case, we can define it using the linearity of the integral. Thus, for $j=1,\hdots,2^{l-1}$, we have that,
\begin{equation}\label{decimaciond}
(\cD^{l-1}_{l} \bar f^l)_j=\frac{1}{2} (\bar f^{l}_{2j-1}+\bar f^{l}_{2j})=\frac{2^l}{2} \left(\int_{x_{2j-2}^l}^{x_{2j-1}^l}f(x) dx + \int_{x_{2j-1}^l}^{x_{2j}^l}f(x)dx\right)={2^{l-1}} \int_{x_{j-1}^{l-1}}^{x_{j}^{l-1}}f(x)dx= \bar f^{l-1}_j.
\end{equation}
Now we have to design a prediction operator that allows to approximate the values at the level $l$ from data at the level $l-1$. We denote it as: $$\cP_{l-1}^l: V^{l-1} \to V^l.$$
In order to construct this operator, we need to impose a consistency property:
\begin{equation}\label{consistencia}
\cD^{l-1}_{l} \circ \cP_{l-1}^{l} = \mathbb{I}_{V^{l-1}},
\end{equation}
being $\mathbb{I}_{V^{l-1}}$ the identity function in the subspace $V^{l-1}$. This is a crucial property because it allows to recover all the information contained in the original data if we firstly apply the prediction operator and then the decimation operator.

Following the previous definitions, $\cP_{l-1}^l \bar f^{l-1}$ is an approximation of $\bar f^l$ and we can define the error vector as the difference between the original data and the approximation,
$$e^l_j=\bar f^l_j - (\cP_{l-1}^l \bar f^{l-1})_j, \quad j=1,\hdots, 2^l.$$
Then, using Eqs. \eqref{decimaciond} and \eqref{consistencia}, for any $j=1,\hdots,2^{l-1}$, we have that
\begin{equation}
\begin{split}
(\cD^{l-1}_{l} e^l)_j&=(\cD^{l-1}_{l} \bar f^l)_j - (\cD^{l-1}_{l} \cP_{l-1}^l \bar f^{l-1})_j,\\
\frac12(e^l_{2j}+e^l_{2j+1})&=\bar f^{l-1}_j-\bar f^{l-1}_j,\\
e^l_{2j}+e^l_{2j+1}&=0.
\end{split}
\end{equation}
We can define the vector of {\it details} as the non redundant information contained in the vector of errors. We can denote the vector of details by $d^l_{j}=e^l_{2j-1}$, $j=1,\hdots,2^{l-1}$. Therefore, if $L$ is the finest level of resolution, we can obtain a MR representation of $\bar f^L$ using the previously presented operators at the $L$ scales of resolution:
$$\bar f^L \equiv (\bar f^{0},d^1,\hdots,d^{L}),$$
The processing of the vectors of details at each scale of resolution allows to design several applications. For example, the truncation of the details using hard thresholding provides compression, the truncation using soft thresholding allows to reduce texture or noise, etc.

Hence, our principal objective is to construct a prediction operator which adequately approximates the original data. In the past years, some new prediction operators based on statistical tools have been designed (see, e.g. \cite{ARANDIGA20132474}). However, typically, linear and nonlinear interpolation techniques have been used (see, e.g. \cite{AD99,Har96,Amat2001273,sema}). At this point, it is necessary to design a reconstruction operator that allows to obtain an integrable function from the data at the level $l$, i.e.
$$\cR_l : V^l \to L^1([0,1]).$$
Taking this into account, we can define the prediction operator as the composition of the decimation operator and the reconstruction operator,
\begin{equation}\label{pred}
\cP_{l-1}^l:=\cD_l \circ \cR_{l-1}.
\end{equation}
Linear piecewise interpolation has been usually applied as the reconstruction operator $\cR_l$, (see, e.g. \cite{AD99,Har96,Amat2001273,sema}). However, this type of interpolation produces Gibbs phenomenon at discontinuity zones. In order to avoid this phenomenon, nonlinear techniques have been introduced in the past years. Essentially non oscillatory (ENO) method (see e.g. \cite{Har96,Abgrall2016xxi,Amat2001273,MR881365,Harten1987231}) has produced interesting results when it has been introduced as the reconstruction operator. In \cite{Liu}, the authors proposed the WENO (Weighted ENO) algorithm with the aim of obtaining a higher order of accuracy at smooth zones while keeping the adaption properties of ENO method close to the discontinuities. WENO method consists on a nonlinear convex combination of the interpolants constructed using the different stencils that ENO algorithm considers. For this, a set of nonlinear weights are designed by means of some values called smoothness indicators. These smoothness indicators are built using divided differences and different constructions can be found in the literature (see for example \cite{JiangShu, IS_nuestro}). The nonlinear weights constructed using these smoothness indicators assure that the stencils that cross a discontinuity have a very small contribution to the final approximation \cite{doi:10.1137/100791579, AMB, Castro20111766, Shu1998, doi:10.1137/070679065}.

In \cite{cellwenoamatruizshu} a new WENO-5 algorithm is designed for the cell averages in order to obtain maximum order at the intervals close to discontinuities. In this work we generalize that algorithm, constructing a WENO-($2r-1$)  algorithm for any odd number $r>1$, and perform some improvements oriented to obtain the same kind of order optimization close to discontinuities for stencils of any length. This paper can be considered as the second part of \cite{articulopaper}. In that paper, we presented the new algorithm interpreting that our data was discretized using the point values of a function.

We construct our algorithm showing general explicit formulas for any $r>1$. We design nonlinear weights that replace the optimal weights defined in the classical WENO-($2r-1$)  algorithm. These new weights are defined such that if there exists a discontinuity, then they tend to a power of $h_l$ and, in other case, they tend to a value that allows to obtain the maximum possible order of accuracy, taking into account all the data values of the stencil that are placed at smooth zones.

This paper is organized as follows: Section \ref{weno_sect} exposes how the classical WENO algorithm is constructed for the cell-average setting. Section \ref{sectionrelation} explains the relation between the construction in the point-values and the cell-average settings. Section \ref{WENO_cell} is dedicated to explain how to construct the new algorithm in the cell averages and to analyze theoretically its accuracy. In Section \ref{sectionrelation} we explain the relation between cell-average and point-value discretizations. Section \ref{num_exp} presents some experiments dedicated to analyze numerically the accuracy of the new algorithm through grid refinement analysis close to discontinuities. We also present examples of application of the new algorithm to the compression of univariate and bivariate functions. Finally, Section \ref{conclusions} presents the conclusions.

\section{Linear and nonlinear techniques. The classical WENO algorithm for the cell-averages}\label{weno_sect}
In this section we construct linear and nonlinear classical reconstruction operators and their corresponding prediction operators (see e.g. \cite{AD99} for more details).
We consider a fixed value $0\leq k\leq r-1$ and a stencil of intervals,
\begin{equation}\label{stencil}
S_k^{r}(j)=\{I_{j-r+k+1}^{l-1},\hdots, I^{l-1}_{j+k}\},
\end{equation}
with $I^{l-1}_{s}=[x^{l-1}_{s-1},x^{l-1}_{s}]$, $s=j-r+k+1,\hdots, j+k$, and we suppose a polynomial for each $j$, $p_k^{r}$, of degree $r-1$ such that:
$$\bar f^{l-1}_{s} = h^{-1}_{l-1}\int_{x^{l-1}_{s-1}}^{x^{l-1}_{s}} p_k^{r}(x) dx, \quad s=j-r+k+1,\hdots, j+k$$
then we define the reconstruction operator as:
$$\cR_{l-1}(\bar f^{l-1})(x)=p^{r}_k(x).$$
Finally, using Eq. \eqref{pred}, the prediction operator can be defined as:
\begin{equation}
(\mathcal{P}_{l-1}^l \bar f^{l-1})_j= (\mathcal{D}_l(\cR_{l-1}(\bar f^{l-1})))_j=h^{-1}_l\int_{x_{j-1}^l}^{x_{j}^l}\cR_{l-1}(\bar f^{l-1})(x)dx=h^{-1}_l\int_{x_{j-1}^l}^{x_{j}^l}p^{r}_k(x)dx.
\end{equation}
It is easy to check that the previous definition provides a linear operator. As the reconstruction operator is a polynomial, the previous prediction operator produces Gibbs phenomenon at the zones affected by discontinuities.

As an example, if $r=3$ we have three different linear prediction operators:
\begin{itemize}
\item For $k=0$ we obtain:
\begin{equation*}
(\mathcal{P}_{l-1}^l \bar f^{l-1})_{2j-1}= -\frac{1}{8} \bar f^{l-1}_{j-2} +\frac12 \bar f^{l-1}_{j-1} +\frac58\bar f_j^{l-1}, \qquad j=1,\hdots,2^{l-1}.
\end{equation*}
\item For $k=1$,
\begin{equation*}
(\mathcal{P}_{l-1}^l \bar f^{l-1})_{2j-1}= \frac{1}{8} \bar f^{l-1}_{j-1} +\bar f^{l-1}_{j} -\frac18\bar f_{j+1}^{l-1}, \qquad j=1,\hdots,2^{l-1}.
\end{equation*}
\item And for $k=2$,
\begin{equation*}
(\mathcal{P}_{l-1}^l \bar f^{l-1})_{2j-1}= \frac{11}{8}\bar  f^{l-1}_{j} -\frac12\bar f^{l-1}_{j+1} +\frac18\bar f_{j+2}^{l-1}, \qquad j=1,\hdots,2^{l-1}.
\end{equation*}
\end{itemize}
In all the three cases and the in the rest of paper, using Eq. \eqref{consistencia} we get that $(\mathcal{P}_{l-1}^l \bar f^{l-1})_{2j}=2\bar f^{l-1}_j-(\mathcal{P}_{l-1}^l \bar f^{l-1})_{2j-1}$, with $j=1,\hdots,2^{l-1}$. Then we only need to show the prediction operator for the cell $I^l_{2j-1}$ as the value of the cell $I^l_{2j}$ can be obtained through (\ref{decimaciond}).
When the function presents a discontinuity, linear methods are not appropriate due to the appearance of numerical effects close to the discontinuities. In this case, we can replace the linear interpolatory technique used in the prediction operator by a nonlinear method, such as WENO strategy. Next section is devoted to introduce the classical WENO algorithm.

\subsection{The classical WENO algorithm for the cell-averages}\label{weno_sect21}

We explain the WENO algorithm using the notation presented in \cite{AMB} and adapting it to the cell averages. As we have presented in previous section, if we consider the stencil $S_0^{2r-1}(j)$, we can construct the polynomial $p^{2r-1}_{0}$ of degree $2r-2$, which approximates $f$ in the cell averages at the intervals of the mentioned stencil. We can also consider the set of $r$ stencils of $r$ cells at the level $l-1$ which contain the interval $I^{l-1}_j=I^{l}_{2j-1}\cup I^{l}_{2j} $, i.e.
\begin{equation}\label{stencil}
S_k^{r}(j)=\{I_{j-r+k+1}^{l-1},\hdots, I^{l-1}_{j+k}\}, \quad k=0,\hdots,r-1.
\end{equation}
Let's denote by $p_k^{r}$, with $k=0,\hdots,r-1,$ the $r$ polynomials constructed over the previous stencils of $r$ cells. Then, we can combine these polynomials in order to obtain an interpolation of higher order. Thus, there exist values $\bar C^r_k$, with $k=0,\hdots,r-1,$ called optimal weights,  such that
\begin{equation}
\left(\cD_l \left(p^{2r-1}_{0} \right)\right)_{2j-1}=\left(\cD_l \left(\sum_{k=0}^{r-1}\bar C^r_k p^{r}_k \right)\right)_{2j-1}, \quad \text{with} \quad \sum_{k=0}^{r-1}\bar C^r_k =1.
\end{equation}
In the next section (Section \ref{sectionrelation}, Lemma \ref{lema1}) we will give an explicit expression for the optimal weights. As we will see in Section \ref{WENO_cell}, the main idea of the new algorithm is to replace the optimal weights, $\bar C^r_k$, by nonlinear weights, such that their value depends on the position of the discontinuity, if the discontinuity crosses the stencil $k$.
Thus, we will consider the following convex combination,
\begin{equation}\label{convex_combb}
\mathcal R_{l-1}(\bar f^{l-1})(x)=\sum_{k=0}^{r-1} \bar \omega_{k}^{r}(j) p_{k}^{r}(x),
\end{equation}
where $\bar \omega_{k}^{r}(j)\ge 0, \, k=0,\hdots, r-1$ and
$\sum_{k=0}^{r-1}\bar \omega_{k}^{r}(j)=1$. The prediction will be determined by:
\begin{equation}
(\mathcal{P}_{l-1}^l \bar f^{l-1})_{2j-1}=\left(\mathcal{D}_l (\mathcal R_{l-1}(\bar f^{l-1}))\right)_{2j-1}=\left(\mathcal{D}_l\left(\sum_{k=0}^{r-1} \bar \omega_{k}^{r}(j) p_{k}^{r}\right)\right)_{2j-1}.
\end{equation}
Classical WENO algorithm in the cell averages consists in designing the nonlinear weights with the aim of obtaining order $2r-1$ when there is no discontinuity crossing the biggest stencil $S^{2r-1}_0$, and of order $r$ in other case. In this sense WENO algorithm emulates the behavior of ENO algorithm. In \cite{Liu} the authors propose the already classic expression,
\begin{equation}\label{pesos}
\bar \omega_{k}^{r}(j)=\frac{\bar \alpha_{k}^{r}(j)}{\sum_{i=0}^{r-1}\bar \alpha_{i}^{r}(j)},\quad k=0,\cdots, r-1,\,\, \textrm{ where }\,\, \bar \alpha_{k}^{r}(j)=\frac{\bar C_{k}^{r}}{(\epsilon+L^r_k(j,\bar{f}))^t},
\end{equation}
where the values $\bar C_k^r$ are the optimal weights defined in Section \ref{sectionrelation}. The integer $t$ serves the purpose of maximizing the order of approximation. In \cite{JiangShu} the authors set $t=2$, in \cite{Liu} and  in \cite{articulopaper} the authors set $t=r$. In this paper we take $t=r$ in order to verify the ENO property and, at the same time, maximize the accuracy. The parameter $\epsilon$ takes positive and small values and its purpose is to avoid divisions by zero. In this article we set $\epsilon=10^{-16}$. Finally, $L_k^r(j,\bar{f})$ are the smoothness indicators for the cell-average values of $f$ on the stencil $S_{k}^r(j)$. In \cite{JiangShu} Jiang and Shu proposed to obtain the smoothness indicators using something similar to the total variation, but based in the $L^2$ norm, so that the result is smoother than the total variation. In Section \ref{sectionrelation} we will introduce the smoothness indicators presented in \cite{WENO_nuevo} adapted to the cell average setting.

Therefore, there are two principal ingredients to determine a WENO algorithm: the optimal weights, $\bar C^r_k$, $k=0,\hdots,r-1$, and the smoothness indicators $L^r_k(j,\bar f)$. In \cite{articulopaper}, in order to obtain progre\-ssive order of accuracy close to discontinuities, the optimal weights are replaced by nonlinear weights in the point-value discretization. It is clear that there exists a relation between the point-value and the cell-average discretizations (see \cite{AD99}). In the next section we review this relation and adapt the components of WENO of the new algorithm to the cell-average discretization.

\section{The relation between the point-value and the cell-average discretizations}\label{sectionrelation}
We can analyze the problem of approximating using the cell average discretization from the point of view of the point-value interpolation if we define the primitive function:
\begin{equation}\label{primitiva}
F(x)=\int_{0}^x f(t)dt,
\end{equation}
that allows to adapt the WENO algorithm presented in \cite{articulopaper} to the cell average setting in a natural way.

We denote the point-value discretization as $F^l_j:=(\cD^{pv}_l F)_j=F(x^l_{j})$. Then, setting $F_0^l=0$, we can define a linear application, $\Lambda_l:V^l\to V^l$, that allows to go from the
cell-average discretization to the point-values as:
\begin{equation}\label{pvtocell}
(\Lambda_l \bar f^{l})_j = h_l \sum_{i=1}^j \bar f^{l}_i=\int_0^{x_j^l}f(x)dx=F_j^l, \quad j=1,\hdots,2^l,
\end{equation}
and its inverse as:
\begin{equation}\label{celltopv}
(\Lambda^{-1}_l F^l)_j =h^{-1}_l(F^l_{j}-F^l_{j-1})=\bar f^l_j, \quad j=1,\hdots,2^l.
\end{equation}
We suppose that $\mathcal{I}_{l-1}(x;F^{l-1})$ is a polynomial interpolator of the primitive $F(x)$ at the nodes $\{x_{j-r+l-1}^{l-1},\hdots, x^{l-1}_{j+l-1}\}$, then we define the reconstruction for the cell averages as the derivative of the reconstruction for the primitive,
$$\cR_{l-1}(\bar f^{l-1})(x)=\frac{d}{dx}\mathcal{I}_{l-1}(x;\Lambda_{l-1}(\bar f^{l-1}))=\frac{d}{dx}\mathcal{I}_{l-1}(x;F^{l-1}).$$
Since $x_{2j-2}^l=x_{j-1}^{l-1}$, then by Eq. \eqref{pred}
\begin{equation}\label{diff2}
\begin{split}
(\cP_{l-1}^l \bar f^{l-1})_{2j-1}&=(\cD_l \circ \cR_{l-1}\bar  f^{l-1})_{2j-1} = h^{-1}_l\int_{x_{j-1}^l}^{x_{j}^l}\cR_{l-1}(\bar f^{l-1})(x)dx\\
&=h^{-1}_l\int_{x_{2j-2}^l}^{x_{2j}^l}\frac{d}{dx}\mathcal{I}_{l-1}(x;F^{l-1})dx\\
&=h^{-1}_l(\mathcal{I}_{l-1}(x_{2j-1}^l;F^{l-1}) - \mathcal{I}_{l-1}(x_{2j-2}^l;F^{l-1}))\\
&=h^{-1}_l(\mathcal{I}_{l-1}(x_{2j-1}^l;F^{l-1}) - F_{j-1}^{l-1})\\
&=h^{-1}_l(\mathcal{I}_{l-1}(x_{2j-1}^l;(\Lambda_{l-1} \bar f^{l-1})_{j-1}) - (\Lambda_{l-1} \bar f^{l-1})_{j-1}).
\end{split}
\end{equation}
The expression in (\ref{diff2}) is precisely the formula of the prediction operator for the cell average values.
Now, we consider the WENO algorithm for the point-value discretization of the primitive function $F$ defined in Eq. \eqref{primitiva}. Let $q^{2r}_{0}$ and $q^{r}_k$, with $k=0,\hdots,r-1$ be the polynomials which interpolate $F$ at the points $\{x^{l-1}_{j-r+1},\hdots,x^{l-1}_{j+r-1}\}$ and $\{x^{l-1}_{j-r+k},\hdots,x^{l-1}_{j+k}\}$ with $k=0,\hdots,r-1$. Now we can define:
\begin{equation}
\mathcal{I}_{l-1}(x_{2j-1}^{l};F^{l-1}):=\sum_{k=0}^{r-1} \omega^r_k(j) q^{r}_k(x_{2j-1}^{l}), \quad \text{with} \quad \sum_{l=0}^{r-1} \omega^r_k(j) =1,
\end{equation}
where $\omega_k^r(j)$ are linear or nonlinear weights. Now we can prove the following proposition:
\begin{proposition}\label{propcentral}
Let's consider $f\in L^1([0,1])$ and $F$ its primitive function defined in Eq. \eqref{primitiva}. Let $q^{r}_k$, with $k=0,\hdots,r-1$ be the polynomials which interpolate $F$ at points $\{x^{l-1}_{j-r+k},\hdots,x^{l-1}_{j+k}\}$ with $k=0,\hdots,r-1$, and $\omega^r_k(j)$, $k=0,\hdots,r-1$, the weights which satisfy that,
\begin{equation}
\mathcal{I}_{l-1}(x_{2j-1}^{l};F^{l-1}):=\sum_{k=0}^{r-1} \omega^r_k(j) q^{r}_k(x_{2j-1}^{l}), \quad \text{with} \quad \sum_{l=0}^{r-1} \omega^r_k(j) =1,
\end{equation}
and
$$F(x_{2j-1}^{l})-\mathcal{I}_{l-1}(x_{2j-1}^{l};F^{l-1})= O(h_{l-1}^{s+1}),$$
with $r\leq s < 2r$.
Let $p^{r}_k$ with $k=0,\hdots,r-1$, be the polynomials which approximate $f$ in the cell-average setting at the stencils $S_{k}^r$ with $k=0,\hdots,r-1$. In this case we have that,
$$\bar f^l_{2j-1}-\left(\mathcal{D}_l\left(\sum_{k=0}^{r-1} \omega_{k}^{r}(j) p_{k}^{r}\right)\right)_{2j-1}=O(h_{l-1}^s)$$
\end{proposition}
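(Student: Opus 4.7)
The plan is to exploit the duality between the cell-average and point-value discretizations supplied by the primitive $F$. Since the cell-average reconstruction is obtained by differentiating a point-value interpolant of $F$, an $O(h_{l-1}^{s+1})$ error on $F$ at the half-node $x_{2j-1}^l$ will translate, after one division by $h_l$, into the claimed $O(h_{l-1}^s)$ error on $\bar f_{2j-1}^l$.

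The first step is the key identification $p_k^r = (q_k^r)'$ for each $k=0,\hdots,r-1$. The point-value stencil $\{x_{j-r+k}^{l-1},\hdots,x_{j+k}^{l-1}\}$ consists precisely of the boundary nodes of the cells forming $S_k^r(j)=\{I_{j-r+k+1}^{l-1},\hdots,I_{j+k}^{l-1}\}$. Hence $(q_k^r)'$ is a polynomial of degree $r-1$ whose cell averages over $S_k^r(j)$ agree, by the fundamental theorem of calculus and the interpolation conditions on $q_k^r$, with the averages $\bar f_s^{l-1}$ on the stencil. Uniqueness of the cell-average interpolant of degree $r-1$ on $r$ cells then forces $p_k^r=(q_k^r)'$.

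Next, I would compute $(\mathcal{D}_l\, p_k^r)_{2j-1}$ via the fundamental theorem of calculus, using the fact that $x_{2j-2}^l = x_{j-1}^{l-1}$ is an interpolation node of $q_k^r$ for every admissible $k$ (the one containment of stencil indices that must be checked). One obtains
$$
(\mathcal{D}_l\, p_k^r)_{2j-1} = h_l^{-1}\left(q_k^r(x_{2j-1}^l) - F_{j-1}^{l-1}\right),
$$
and taking the $\omega$-weighted combination, using the linearity of $\mathcal{D}_l$ together with $\sum_{k=0}^{r-1}\omega_k^r(j)=1$, yields
$$
\left(\mathcal{D}_l\left(\sum_{k=0}^{r-1}\omega_k^r(j)\,p_k^r\right)\right)_{2j-1} = h_l^{-1}\left(\mathcal{I}_{l-1}(x_{2j-1}^l;F^{l-1}) - F_{j-1}^{l-1}\right).
$$
Since the exact average satisfies $\bar f_{2j-1}^l = h_l^{-1}(F(x_{2j-1}^l)-F_{j-1}^{l-1})$, subtracting the two expressions cancels the $F_{j-1}^{l-1}$ term, and the target difference equals $h_l^{-1}\bigl(F(x_{2j-1}^l)-\mathcal{I}_{l-1}(x_{2j-1}^l;F^{l-1})\bigr) = h_l^{-1}\cdot O(h_{l-1}^{s+1}) = O(h_{l-1}^s)$, which is precisely the claim.

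The main obstacle is really just the identification $p_k^r=(q_k^r)'$; once this is in hand the rest reduces to a short algebraic manipulation plus a single loss of an $h_l$ power. In particular, the nonlinearity of the $\omega_k^r(j)$ plays no role in the argument: only the linearity of $\mathcal{D}_l$ in the $p_k^r$ and the partition-of-unity property of the weights are used, which is why the bound is insensitive to the specific choice of weights beyond the prescribed consistency of $\mathcal{I}_{l-1}$.
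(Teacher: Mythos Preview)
Your proof is correct and follows essentially the same route as the paper. The only cosmetic difference is that you phrase the key relation as $p_k^r=(q_k^r)'$ and invoke the fundamental theorem of calculus directly on $(\mathcal{D}_l p_k^r)_{2j-1}$, whereas the paper writes the equivalent integral identity $q_k^r(x)=\int_{x_{j-r+k}^{l-1}}^x p_k^r(t)\,dt+F_{j-r+k}^{l-1}$ and then splits the integral at $x_{j-1}^{l-1}$; both arguments hinge on the same observation that $x_{2j-2}^l=x_{j-1}^{l-1}$ is an interpolation node of every $q_k^r$, and both conclude via the same cancellation and division by $h_l$.
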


\begin{proof}
Let $q^r_k$ be the interpolatory polynomials of degree $r$ of $F$ at points $\{x_{j-r+k}^{l-1},\hdots, x_{j+k}^{l-1}\}$ with $k=0,\hdots,r-1$,
\begin{equation}
q^r_k(x_{n}^{l-1})=F_{n}^{l-1}, \quad n=j+k-r,\hdots,j+k,
\end{equation}
and let $p^{r}_{k}$ be the polynomials of degree $r-1$ which approximate $f$ in the cell-averages at $S_k^{r}(j)$, with $k=0,\hdots, r-1$, then it is easy to check that:
\begin{equation}
q^r_k(x)=\int_{x_{j-r+k}^{l-1}}^x p^{r}_{k}(t)dt+F_{j-r+k}^{l-1}, \quad k=0,\hdots,r-1,
\end{equation}
then
\begin{equation*}
\begin{split}
\mathcal{I}_{l-1}(x_{2j-1}^{l};F^{l-1})=&\sum_{k=0}^{r-1} \omega^r_k(j) q^{r}_k(x_{2j-1}^{l})=\sum_{k=0}^{r-1} \left(\omega^r_k(j) \left(\int_{x_{j-r+k}^{l-1}}^{x_{2j-1}^{l}} p^{r}_{k}(t)dt+F_{j-r+k}^{l-1}\right)\right)\\
=&\sum_{k=0}^{r-1} \left(\omega^r_k(j) \left(\int_{x_{j-1}^{l-1}}^{x_{2j-1}^{l}} p^{r}_{k}(t)dt+\int_{x_{j-r+k}^{l-1}}^{x_{j-1}^{l-1}} p^{r}_{k}(t)dt+F_{j-r+k}^{l-1}\right)\right)\\
=&\sum_{k=0}^{r-1} \left(\omega^r_k(j) \left(\int_{x_{j-1}^{l-1}}^{x_{2j-1}^{l}} p^{r}_{k}(t)dt+F_{j-1}^{l-1}\right)\right)\\
=&\sum_{k=0}^{r-1} \left(\omega^r_k(j) \left(\int_{x_{j-1}^{l-1}}^{x_{2j-1}^{l}} p^{r}_{k}(t)dt\right)\right)+F_{j-1}^{l-1}= \int_{x_{j-1}^{l-1}}^{x_{2j-1}^{l}}\left(\sum_{k=0}^{r-1}\omega^r_k(j)  p^{r}_{k}(t)dt\right)+F_{j-1}^{l-1}\\
=& h_l\left(\mathcal{D}_l\left(\sum_{k=0}^{r-1} \omega_{k}^{r}(j) p_{k}^{r}\right)\right)_{2j-1}+F_{j-1}^{l-1}\\
\end{split}
\end{equation*}
thus,
\begin{equation}\label{eq3}
\left(\mathcal{D}_l\left(\sum_{k=0}^{r-1} \omega_{k}^{r}(j) p_{k}^{r}\right)\right)_{2j-1}= h_l^{-1}(\mathcal{I}(x_{2j-1}^{l};F^{l-1})-F_{j-1}^{l-1}),
\end{equation}
and by Eq. \eqref{celltopv}:
\begin{equation*}
\bar f^{l}_{2j-1}=h_l^{-1}(F^{l}_{2j-1}-F^l_{2j-2})=h_l^{-1}(F^{l}_{2j-1}-F^{l-1}_{j-1}).
\end{equation*}
Thus, by \eqref{celltopv},  \eqref{eq3} and by hypothesis, we have:
\begin{equation*}
\begin{split}
\bar f^l_{2j-1}-\left(\mathcal{D}_l\left(\sum_{k=0}^{r-1} \omega_{k}^{r}(j) p_{k}^{r}\right)\right)_{2j-1}&=h^{-1}_l\left(F^l_{2j-1}-F^{l-1}_{j-1}- \mathcal{I}_{l-1}(x_{2j-1}^{l};F^{l-1})+F^{l-1}_{j-1}\right) \\
&=\frac{2O(h_{l-1}^{s+1})}{h_{l-1}}=O(h_{l-1}^{s}).
\end{split}
\end{equation*}
\end{proof}
This proposition allows us to define the new WENO method using the algorithm for the point-value discretization designed in \cite{articulopaper}. The weights $\omega_k^r(j)$ are exactly the same in both discretizations. If  $\bar\omega_k^r(j)$ are the weights defined for the cell-average setting and $\omega_k^r(j)$ the ones for the point-value setting, then,
$$ \bar\omega_k^r(j) = \omega_k^r(j), \quad k=0,\hdots,r-1.$$

Finally, we only have to determine the relation between the smoothness indicators for $L^r_k(j,F)$ and $L^r_k(j,\bar f)$.

In \cite{cellwenoamatruizshu,WENO_nuevo, generalizacion} the authors proposed to use the smoothness indicators given by the expression
\begin{equation}\label{si_nuestro2}
L_k^r(j,F)=\sum_{i=2}^{r} h_{l-1}^{2i-1}\int_{x^{l-1}_{j-1}}^{x^{l-1}_{j}}\left(\frac{d^i}{dx^i}q_{k}^{r}(x)\right)^2 dx,
\end{equation}
where $q_k^r$ are the interpolatory polynomials of $F$ at points $\{x_{j-r+k}^{l-1},\hdots, x_{j+k}^{l-1}\}$ with $k=0,\hdots,r-1$. In terms of the
cell-averages, the formula would be,
\begin{equation}\label{si_nuestro3}
L_k^{r}(j,\bar{f})=\sum_{i=1}^{r-1} h_{l-1}^{2i-1}\int_{x^{l-1}_{j-1}}^{x^{l-1}_{j}}\left(\frac{d^i}{dx^i}p_{k}^{r}(x)\right)^2 dx,
\end{equation}
where $p_{k}^{r}$ are the polynomials that approximate $f$ in the cell-averages at the stencils $S^r_k$, $k=0,\hdots,r-1$.
In \cite{cellwenoamatruizshu,WENO_nuevo, generalizacion} it is proved that the smoothness indicators in the cell-averages
(\ref{si_nuestro3}) and the ones for the point-value setting (\ref{si_nuestro2}) proposed in \cite{WENO_nuevo} are related through the expression,
\begin{equation}\label{relacion}
L_k^r(j,F)=h_{l-1}^2 L_k^r(j,\bar{f}).
\end{equation}
It is easy to check that the smoothness indicators can be obtained through (\ref{si_nuestro3}) using the previous polynomials. The expression for the smoothness indicators for $r=3$ cells can be expressed in terms of finite differences as,
\begin{equation}\label{IS4}
\begin{aligned}
L_{0}^2(j,\bar{f})&=\frac{10}{3}(\delta^2_{j-2})^2+3 \delta^2_{j-2} \delta_{j-2}+(\delta_{j-2})^2,\\
L_{1}^2(j,\bar{f})&=\frac{4}{3}(\delta^2_{j-1})^2+ \delta^2_{j-1} \delta_{j-1}+(\delta_{j-1})^2,\\
L_{2}^2(j,\bar{f})&=\frac{4}{3}(\delta^2_{j})^2-3 \delta^2_{j} \delta_{j}+(\delta_{j})^2,
\end{aligned}
\end{equation}
with $\delta_{j}= \bar{f}_{j+1}-\bar{f}_{j}$ and $\delta_{j}^2= \bar{f}_{j}-2\bar{f}_{j+1}+\bar{f}_{j+2}$.
The smoothness indicators for $r=4$ cells obtained using the same process are,

\begin{equation}\label{IS5}
\begin{aligned}
\tilde L_{0}^3&={\frac {27}{2}}\,\delta^3_{j-3}\,\delta^2_{j-3}+\frac{11}{3}\,\delta^3_{j-3}\,\delta_{j-3}+5\,\delta^2_{j-3}\,\delta_{j-3}+{\frac {2107}{240}}\,(\delta^3_{j-3})^{2}+{\frac {22}{3}}\,(\delta^2_{j-3})^{2}+(\delta_{j-3})^{2},\\
\tilde L_{1}^3&={\frac {547}{240}}(\delta^3_{j-2})^{2}+\frac{10}{3}\,(\delta^2_{j-2})^{2}+(\delta_{j-2})^{2}+{\frac {19}{6}}\,\delta^3_{j-2}\,\delta^2_{j-2}+\frac{2}{3}\,\delta^3_{j-2}\,\delta_{j-2}+3\,\delta^2_{j-2}\,\delta_{j-2},\\
\tilde L_{2}^3&={\frac {89}{80}}\,(\delta^3_{j-1})^{2}+\frac{4}{3}\,(\delta^2_{j-1})^{2}+(\delta_{j-1})^{2}-\frac{1}{6}\,\delta^3_{j-1}\,\delta^2_{j-1}-\frac{1}{3}\,\delta^3_{j-1}\,\delta_{j-1}+\delta^2_{j-1}\,\delta_{j-1},\\
\tilde L_{3}^3&={\frac {547}{240}}\,(\delta^3_{j})^{2}+\frac{4}{3}\,(\delta^2_{j})^{2}+(\delta_{j})^{2}-\frac{5}{2}\,\delta^3_{j}\,\delta^2_{j}+\frac{2}{3}\,\delta^3_{j}\,\delta_{j}-\delta^2_{j}\,\delta_{j},
\end{aligned}
\end{equation}
with $\delta_{j}^3={ -\bar{f}_{j}}+3\,{ \bar{f}_{j+1}}-3\,{ \bar{f}_{j+2}}+{ \bar{f}_{j+3}}$. For $r=5$ we obtain the following smoothness indicators
\begin{equation}\label{IS6}
\begin{aligned}
\tilde L_{0}^4&={\frac {53959}{2520}}\,(\delta^4_{j-4})^{2}+{\frac {2369}{80}}\,(\delta^3_{j-4})^{2}+{\frac {40}{3}}\,(\delta^2_{j-4})^{2}+(\delta_{j-4})^{2}+{\frac {10183}{240}}\,\delta^4_{j-4}\,\delta^3_{j-4}+{\frac {209}{10}}\,\delta^4_{j-4}\,\delta^2_{j-4}\\
&+{\frac {25}{6}}\,\delta^4_{j-4}\,\delta_{j-4}+{\frac {221}{6}}\,\delta^2_{j-4}\,\delta^3_{j-4}+{\frac {26}{3}}\,\delta_{j-4}\,\delta^3_{j-4}+7\,\delta_{j-4}\,\delta^2_{j-4},\\
\tilde L_{1}^4&={\frac {651}{80}}\,\delta^4_{j-3}\,\delta^3_{j-3}+{\frac {97}{30}}\,\delta^4_{j-3}\,\delta^2_{j-3}+\frac{1}{2}\,\delta^4_{j-3}\,\delta_{j-3}+{\frac {27}{2}}\,\delta^2_{j-3}\,{\it
d42}+\frac{11}{3}\,\delta_{j-3}\,\delta^3_{j-3}+5\,\delta_{j-3}\,\delta^2_{j-3}\\
&+{\frac {11329}{2520}}\,(\delta^4_{j-3})^{2}+{\frac {2107}{240}}\,(\delta^3_{j-3})^{2}+{\frac {22}{3}}\,(\delta^2_{j-3})^{2}+(\delta_{j-3})^{2},\\
\tilde L_{2}^4&={\frac {203}{240}}\,\delta^4_{j-2}\,\delta^3_{j-2}-{\frac {13}{30}}\,\delta^4_{j-2}\,\delta^2_{j-2}+{\frac {1727}{1260}}\,(\delta^4_{j-2})^{2}+{\frac {547}{240}}\,(\delta^3_{j-2})^{2}+\frac{10}{3}\,(\delta^2_{j-2})^{2}+(\delta_{j-2})^{2}-\frac{1}{6}\,\delta^4_{j-2}\,\delta_{j-2}\\
&+{\frac {19}{6}}\,\delta^2_{j-2}\,\delta^3_{j-2}+\frac{2}{3}\,\delta_{j-2}\,\delta^3_{j-2}+3\,\delta_{j-2}\,\delta^2_{j-2},\\
\tilde L_{3}^4&=-{\frac {89}{80}}\,\delta^4_{j-1}\,\delta^3_{j-1}-\frac{1}{10}\,\delta^4_{j-1}\,\delta^2_{j-1}+\frac{1}{6}\,\delta^4_{j-1}\,\delta_{j-1}-\frac{1}{6}\,\delta^2_{j-1}\,\delta^3_{j-1}-\frac{1}{3}\,\delta_{j-1}\,\delta^3_{j-1}+\delta_{j-1}\,\delta^2_{j-1}\\
&+{\frac {1727}{1260}}\,(\delta^4_{j-1})^{2}+{\frac {89}{80}}\,(\delta^3_{j-1})^{2}+\frac{4}{3}\,(\delta^2_{j-1})^{2}+(\delta_{j-1})^{2},\\
\tilde L_{4}^4&={\frac {11329}{2520}}\,(\delta^4_{j})^{2}+{\frac {547}{240}}\,(\delta^3_{j})^{2}+\frac{4}{3}\,(\delta^2_{j})^{2}+(\delta_{j})^{2}-\frac{1}{2}\,\delta^4_{j}\,\delta_{j}-\frac{5}{2}\,\delta^2_{j}\,\delta^3_{j}+\frac{2}{3}\,\delta_{j}\,\delta^3_{j}-\delta_{j}\,\delta^2_{j}-{\frac {1297}{240}}\,\delta^4_{j}\,\delta^3_{j}\\
&+{\frac {67}{30}}\,\delta^4_{j}\,\delta^2_{j},
\end{aligned}
\end{equation}
with $\delta_{j}^4={ \bar{f}_{j}}-4\,{ \bar{f}_{j+1}}+6\,{ \bar{f}_{j+2}}-4{ \bar{f}_{j+3}}+{ \bar{f}_{j+4}}$.

For the sake of completeness, we obtain the optimal weights mentioned in the previous section using the following proposition proved in \cite{AMB}.

\begin{proposition}{}\label{Propox1}
Let $F$ be the primitive function of $f$ defined in Eq. \eqref{primitiva}, $q^{2r}_{0}$ and $q^{r}_k$ the polynomials which interpolate $F$ at points $\{x^{l-1}_{j-r},\hdots,x^{l-1}_{j+r-1}\}$ and $\{x^{l-1}_{j-r+k},\hdots,x^{l-1}_{j+k}\}$ and $\bar C^r_k$  with $k=0,\hdots,r-1$, the values which satisfy:
$$q^{2r-1}_{0}(x_{2j-1}^{l})=\sum_{k=0}^{r-1}\bar C^r_k q^{r}_k(x_{2j-1}^{l}), \quad \text{with} \quad \sum_{l=0}^{r-1}\bar C^r_k =1,$$
then
\begin{equation}\label{equationpesosoptimos}
\bar C^r_k=\frac{1}{2^{2r-1}} {2r \choose 2k+1}, \quad k=0,\dots,r-1.
\end{equation}
\end{proposition}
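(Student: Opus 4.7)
The plan is to exploit the uniqueness of polynomial interpolation: convert the identity to a linear system for the $\bar C^r_k$ and verify that the claimed binomial formula solves it. Both sides of
\[
q^{2r-1}_0(x^l_{2j-1})=\sum_{k=0}^{r-1}\bar C^r_k\, q^r_k(x^l_{2j-1})
\]
are linear functionals of the data $(F^{l-1}_{j-r},\dots,F^{l-1}_{j+r-1})$, so it suffices to check the identity on a basis of polynomials of degree $\le 2r-1$. Since $q^{2r-1}_0$ reproduces every polynomial of degree $\le 2r-1$ while each $q^r_k$ reproduces only those of degree $\le r$, the test monomials $F(x)=1,x,\dots,x^r$ collapse to the single normalization $\sum_k\bar C^r_k=1$, and the remaining monomials $x^{r+1},\dots,x^{2r-1}$ contribute exactly $r-1$ additional conditions. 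This yields a balanced linear system of $r$ equations in $r$ unknowns.

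For the explicit computation I would pass to the local coordinate $t=(x-x^{l-1}_j)/h_{l-1}$, in which the nodes of $q^r_k$ are $\{-r+k,\dots,k\}$, those of $q^{2r-1}_0$ are $\{-r,\dots,r-1\}$, and the evaluation point $x^l_{2j-1}$ corresponds to $t=-1/2$; notice that the large stencil is symmetric about the evaluation point. Working through the Newton error formula
\[
F(-\tfrac{1}{2})-q^r_k(-\tfrac{1}{2})=F[-r+k,\dots,k,-\tfrac{1}{2}]\prod_{s=-r+k}^{k}(-\tfrac{1}{2}-s),
\]
the product on the right is a product of half-integers, and the full combinatorial matching reduces to identities involving products of consecutive odd integers. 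After clearing common factors of $2$, these regroup into ratios of factorials producing $\binom{2r}{2k+1}$, while the accumulated factor $2^{2r-1}$ arises from the total number of half-integer terms in the combined computation.

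The main obstacle will be the combinatorial bookkeeping required to match coefficients across all intermediate degrees $r+1\le n\le 2r-1$ simultaneously, rather than merely verifying the normalization $\sum_k\bar C^r_k=1$ (which follows at once from $\sum_{k=0}^{r-1}\binom{2r}{2k+1}=2^{2r-1}$, obtained by evaluating $(1+1)^{2r}-(1-1)^{2r}$). The higher-moment identities should follow from the same circle of Vandermonde and binomial-theorem arguments applied to $(1\pm x)^{2r}$, and the symmetry $\bar C^r_k=\bar C^r_{r-1-k}$ is manifest from the reflection $k\leftrightarrow r-1-k$ that maps one stencil to another through the center $t=-1/2$. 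Uniqueness of the $r$-dimensional linear system established at the outset then completes the argument.
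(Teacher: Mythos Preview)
The paper does not give its own proof of this proposition: it states the result ``for the sake of completeness'' and attributes the proof to the reference \cite{AMB}. So there is no in-paper argument to compare your proposal against.

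On its own merits, your strategy is sound but incomplete. The reduction to a linear system by testing on monomials is correct, and your observation that degrees $0,\dots,r$ collapse to the single normalization $\sum_k\bar C^r_k=1$ is right (each $q^r_k$ reproduces polynomials of degree $\le r$ exactly). The symmetry $\bar C^r_k=\bar C^r_{r-1-k}$ and the normalization check via $(1+1)^{2r}-(1-1)^{2r}$ are also fine.

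The gap is precisely where you flag it: you never carry out the combinatorial verification that $\bar C^r_k=\binom{2r}{2k+1}/2^{2r-1}$ satisfies the moment conditions for degrees $r+1,\dots,2r-1$. ``Should follow from the same circle of Vandermonde and binomial-theorem arguments'' is a hope, not a proof. Second, ``balanced'' ($r$ equations in $r$ unknowns) does not imply uniqueness; you still need nonsingularity. A cleaner route that handles both issues at once is to match coefficients of the data values $F^{l-1}_m$ directly rather than moments: the leftmost node $x^{l-1}_{j-r}$ lies only in the stencil of $q^r_0$, the node $x^{l-1}_{j-r+1}$ only in those of $q^r_0$ and $q^r_1$, and so on. This triangular structure forces uniqueness and lets you solve for the $\bar C^r_k$ successively as ratios of Lagrange weights, from which the binomial formula drops out with far less bookkeeping than the Newton-form approach you sketch.
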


With the following lemma we will prove that the optimal weights are the same in the cell-average and the point-value discretizations. The proof is similar to the one presented for Proposition \ref{propcentral}.
\begin{lemma}\label{lema1}
Let us consider $f\in L^1([0,1])$ and let $p^{2r}_{0}$ and $p^{r}_k$ with $k=0,\hdots,r-1$, be the polynomials which approximate $f$ in the cell-averages at the stencils $S_{0}^{2r-1}$ and $S_{k}^r$ respectively. Then, $\bar C^r_k$  with $k=0,\hdots,r-1$, are the values which satisfy,
$$\left(\cD_l \left(p^{2r-1}_{0}\right)\right)_{2j-1}=\left(\cD_l \left(\sum_{k=0}^{r-1}\bar C^r_k p^{r}_k\right)\right)_{2j-1}, \quad \text{with} \quad \sum_{k=0}^{r-1}\bar C^r_k =1,$$
being $\cD_l$ the operator defined in Eq. \eqref{discretiza}
then
\begin{equation*}
\bar C^r_k=\frac{1}{2^{2r-1}} {2r \choose 2k+1}, \quad k=0,\dots,r-1.
\end{equation*}
\end{lemma}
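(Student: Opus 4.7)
The plan is to mirror the argument used in the proof of Proposition \ref{propcentral}, translating the point-value identity of Proposition \ref{Propox1} into the cell-average identity by means of the primitive function $F$ defined in (\ref{primitiva}).

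First I would recall the fundamental bridge between the two discretizations: for each $k=0,\hdots,r-1$, the cell-average polynomial $p^r_k$ and the point-value interpolant $q^r_k$ are related by
\[
q^r_k(x)=\int_{x^{l-1}_{j-r+k}}^{x} p^r_k(t)\,dt+F^{l-1}_{j-r+k},
\]
because $q^r_k$ interpolates $F$ at the nodes $\{x^{l-1}_{j-r+k},\hdots,x^{l-1}_{j+k}\}$ while $p^r_k$ reproduces the cell averages of $f$ on $S^r_k(j)$. An analogous identity holds for the big stencil polynomial $p^{2r-1}_0$ and its primitive interpolant $q^{2r-1}_0$.

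Next I would start from the point-value identity provided by Proposition \ref{Propox1},
\[
q^{2r-1}_{0}(x_{2j-1}^{l})=\sum_{k=0}^{r-1}\bar C^r_k\, q^{r}_k(x_{2j-1}^{l}),
\]
with $\bar C^r_k=\frac{1}{2^{2r-1}}\binom{2r}{2k+1}$ and $\sum_k \bar C^r_k=1$. Substituting the integral representation above into both sides and splitting the integral $\int_{x^{l-1}_{j-r+k}}^{x^{l}_{2j-1}}$ as $\int_{x^{l-1}_{j-r+k}}^{x^{l-1}_{j-1}}+\int_{x^{l-1}_{j-1}}^{x^{l}_{2j-1}}$, I would use the fact that $\int_{x^{l-1}_{j-r+k}}^{x^{l-1}_{j-1}}p^r_k(t)\,dt+F^{l-1}_{j-r+k}=F^{l-1}_{j-1}$ (which follows directly from the cell-average conditions satisfied by $p^r_k$ on the intervals of $S^r_k(j)$). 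The same telescoping applies to $p^{2r-1}_0$. After using $\sum_k \bar C^r_k=1$ to factor $F^{l-1}_{j-1}$ out, both boundary terms coincide and cancel.

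What is left is the equality of integrals from $x^{l-1}_{j-1}$ to $x^{l}_{2j-1}$, which when divided by $h_l$ is exactly the cell-average of the relevant polynomial on the cell $I^l_{2j-1}$:
\[
\left(\cD_l\left(p^{2r-1}_{0}\right)\right)_{2j-1}=\left(\cD_l\left(\sum_{k=0}^{r-1}\bar C^r_k p^{r}_k\right)\right)_{2j-1}.
\]
Together with $\sum_{k=0}^{r-1}\bar C^r_k=1$, this delivers the claim with the same explicit values of $\bar C^r_k$. The only delicate step is carefully tracking the integration bounds and invoking the consistency of $p^r_k$ with the cell averages on $S^r_k(j)$ so that the boundary terms $F^{l-1}_{j-r+k}$ all collapse to the common value $F^{l-1}_{j-1}$; once that bookkeeping is done, no new computation beyond Proposition \ref{Propox1} is needed.
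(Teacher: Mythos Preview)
Your proposal is correct and follows precisely the route the paper intends: it explicitly says the proof ``is similar to the one presented for Proposition \ref{propcentral},'' and you have carried out exactly that computation, using the integral relation between $q^r_k$ and $p^r_k$ to transfer the point-value identity of Proposition \ref{Propox1} to the cell-average identity. The only minor remark is that, since the lemma is phrased as a uniqueness statement, you may want to note that the passage from the cell-average identity to the point-value identity is reversible (multiply by $h_l$ and add $F^{l-1}_{j-1}$), so any $\bar C^r_k$ satisfying the cell-average relation with $\sum_k\bar C^r_k=1$ also satisfies the point-value relation and is therefore determined by Proposition \ref{Propox1}.
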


In \cite{articulopaper,cellwenoamatruizshu,WENO_nuevo,generalizacion}, new weights are constructed with the aim of obtaining progressive order of accuracy close to discontinuities (if they are far enough from each other). We review the construction of the weights presented in \cite{articulopaper} in Section \ref{WENO_cell}. The strategy consists in replacing the linear optimal weights $\bar C_{k}^{r}$ in \eqref{pesos} by nonlinear weights.

\section{Review of the new WENO algorithm and its properties}\label{WENO_cell}

In this section, we review the new algorithm presented in \cite{articulopaper} for any $r$. In \eqref{pesos} we replace the $\bar \alpha_{k}^{r}(j)$ by $\tilde \alpha_{k}^{r}(j)$,
\begin{equation}\label{pesosr1finales}
\tilde \omega_{k}^{r}(j)=\frac{\tilde \alpha_{k}^{r}(j)}{\sum_{i=0}^{r-1}\tilde \alpha_{i}^{r}(j)},\quad k=0,\cdots, r-1,\,\, \textrm{ where }\,\, \tilde \alpha_{k}^{r}(j)=\frac{\tilde C_{k}^{r}}{(\epsilon+L^r_k(j,\bar{f}))^t},
\end{equation}
where we have replaced the $\bar C_k^{r}$ by,
\begin{equation}\label{eqsuperimp}
\begin{split}
(\tilde{C}_0^r,\hdots,\tilde{C}_{r-1}^r)=\sum_{j_0=0}^1 \tilde{\omega}^{2r-2}_{0,j_0}\left(\sum_{j_1=j_0}^{j_0+1}\tilde{\omega}_{j_0,j_1}^{2r-3}\left(\sum_{j_2=j_1}^{j_1+1}\tilde{\omega}_{j_1,j_2}^{2r-4}
\left(\dots\left(\sum_{j_{r-2}=j_{r-3}}^{j_{r-3}+1} \tilde{\omega}^{r+1}_{j_{r-3},j_{r-2}}{\bf C_{j_{r-2}}^{r+1}} \right)\dots\right)\right)\right),
\end{split}
\end{equation}
with,
\begin{equation}\label{nl_op_w_r}
\begin{split}
&{\bf C_{0}^{r+1}}=\left(C_{0,0}^r, C_{0,1}^r, 0,0, \hdots,0\right)=\left(\frac{3}{2(r+1)}, \frac{2r-1}{2(r+1)},0, 0, \dots,0\right),\\
&{\bf C_{1}^{r+1}}=\left(0,C_{1,1}^r, C_{1,2}^r,0, \hdots, 0\right)=\left(0, \frac{5}{2(r+1)},\frac{2r-3}{2(r+1)},0,\dots,0\right),\\
 &\qquad \qquad\qquad\vdots\\
&{\bf C_{r-3}^{r+1}}=\left(0,\hdots,0,C_{r-3,r-3}^r, C_{r-3,r-2}^r, 0\right)=\left(0,\hdots,0,\frac{2r-3}{2(r+1)},\frac{5}{2(r+1)}, 0\right),\\
&{\bf C_{r-2}^{r+1}}=\left(0,\hdots,0, 0,C_{r-2,r-2}^r, C_{r-2,r-1}^r\right)=\left(0,\hdots,0, 0, \frac{2r-1}{2(r+1)},\frac{3}{2(r+1)}\right).\\
\end{split}
\end{equation}
and
\begin{equation}\label{pesosr}
\begin{split}
&\tilde{\omega}^n_{k,k_1}=\frac{\tilde{\alpha}_{k,k_1}^n}{\tilde{\alpha}_{k,k}^n+\tilde{\alpha}_{k,k+1}^n},\quad k_1=k,\,\, k+1,\\
&\tilde{\alpha}_{k,k_1}^n=\frac{C_{k,k_1}^n}{(\epsilon+\tilde L^n_{k,k_1}(j,F))^t}, \quad k_1=k,\,\,k+1,\\
\end{split}
\end{equation}
being
\begin{equation}\label{pesostodos}
C^n_{k,k}=\frac{2(n-r+k+1)+1}{2(n+1)}, \quad C^n_{k,k+1}=1-C^n_{k,k}=\frac{2(r-k)-1}{2(n+1)}.
\end{equation}
The smoothness indicators $\tilde L^n_{k,k_1}(j,F)$ are defined as,
\begin{equation}\label{equationindicadores}
\begin{split}
&\tilde L_{k,k}^{n}(j,F)=L_k^r(j,F), \quad k=0,\dots,(2r-2)-n,\\
&\tilde L_{k,k+1}^{n}(j,F)=L_{n-(r-1)+k}^r(j,F), \quad k=0,\dots,(2r-2)-n,
\end{split}
\end{equation}
where $L^r_k(j,F)$, with $k=0,\hdots,r-1$ are the smoothness indicators shown in Eq. \eqref{si_nuestro2}. With these changes, in \cite{articulopaper} its is proved the following theorem:

\begin{theorem}\label{teo1}
Let's consider $1< l_0 \leq r-1$, $\tilde \omega_k^r(j)$ defined in Eq. \eqref{pesosr1finales} and $q^r_k$ the interpolatory polynomials for $F$ at the points $\{x_{j-r+k}^{l-1},\hdots, x_{j+k}^{l-1}\}$ with $k=0,\hdots,r-1$.
 If $f$ is smooth in $[x^{l-1}_{j-r},x^{l-1}_{j+r-1}]\setminus \Omega$ and $f$ has a discontinuity at $\Omega$ then
\begin{equation}
\sum_{k=0}^{r-1}\tilde \omega^r_k(j) q^r_k(x^l_{2j-1})-F(x^l_{2j-1})=\left\{
                                                  \begin{array}{ll}
                                                    O(h_{l-1}^{2r}), & \hbox{if $\,\,\Omega=\emptyset$;} \\
O(h_{l-1}^{r+l_0}), & \hbox{if   $\,\,\Omega=[x^{l-1}_{j+l_0-1},x^{l-1}_{j+l_0}]$; }\\
                                                  \end{array}
                                                \right.
\end{equation}
\end{theorem}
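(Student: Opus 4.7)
The plan is to reduce the statement to the analogous point-value WENO theorem established in \cite{articulopaper}. Observe that the quantity to be bounded, $\sum_{k=0}^{r-1}\tilde\omega_k^r(j) q_k^r(x^l_{2j-1}) - F(x^l_{2j-1})$, is written entirely in terms of the interpolants $q_k^r$ of the primitive $F$ at point values, while the nested nonlinear weights $\tilde\omega_k^r(j)$ defined through \eqref{pesosr1finales}--\eqref{pesostodos} are expressed via the smoothness indicators $\tilde L_{k,k_1}^n(j,F)$. Hence the statement is essentially the point-value WENO accuracy result applied to $F$. A jump discontinuity of $f$ on $[x^{l-1}_{j+l_0-1},x^{l-1}_{j+l_0}]$ translates into a $C^{0}$ corner of $F$ on the same interval, which is exactly the setting handled in \cite{articulopaper}.

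First I would treat the smooth case $\Omega=\emptyset$. Expanding each $L_k^r(j,F)$ by Taylor, all of the indicators share a common leading behavior of order $O(h_{l-1}^{2})$, so at every level $n$ the ratio $\tilde\omega^n_{k,k_1}$ differs from $C^n_{k,k_1}$ by a term that vanishes with $h_{l-1}$. Unrolling the nested sum in \eqref{eqsuperimp} and using the combinatorial identity that the coefficients $C^n_{k,k_1}$ telescope into the classical optimal weights $\bar C_k^r$ of Proposition \ref{Propox1}, one concludes that the resulting $\tilde C_k^r$ coincide with $\bar C_k^r$ up to a perturbation negligible against $h_{l-1}^{2r}$. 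The WENO reconstruction then agrees with the optimal degree-$(2r-1)$ interpolant of $F$ at $2r$ nodes, whose classical error bound is $O(h_{l-1}^{2r})$.

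For the discontinuous case, the corner of $F$ lies inside the stencil $S_k^r(j)$ precisely when $k\ge l_0$, so the $l_0$ stencils with $k=0,\dots,l_0-1$ remain entirely smooth while the other $r-l_0$ cross the singularity, and the union of the smooth stencils covers $r+l_0$ consecutive nodes. Because $F$ has a corner, the divided differences driving the smoothness indicators on crossing stencils are $O(1)$ whereas on smooth stencils they are $O(h_{l-1}^{2})$. Whenever the pair of indicators feeding $\tilde\omega^n_{k,k_1}$ mixes a crossing stencil with a smooth one, the weight on the bad side is suppressed by a factor of order $(h_{l-1}^{2})^{t}=h_{l-1}^{2r}$. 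Inductively from $n=2r-2$ downward, the nested convex combination discards the crossing stencils and concentrates on the block of $r+l_0$ smooth nodes, producing the optimal interpolant on that block with error $O(h_{l-1}^{r+l_0})$.

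The main obstacle will be the bookkeeping in the inductive step for the discontinuous case: identifying at each level $n$ which of the indicators $\tilde L^n_{k,k_1}$ are large, verifying that the suppression at every stage is strong enough that the aggregated perturbation remains below $O(h_{l-1}^{r+l_0})$, and confirming that the final effective weights distribute exactly over the $r+l_0$ smooth nodes rather than leaking back into crossing stencils. The choice $t=r$ and the specific staircase structure of $C_{k,k_1}^n$ in \eqref{pesostodos} are critical here; once these estimates are in place, the cell-average version follows from the point-value argument in \cite{articulopaper} via Proposition \ref{propcentral}, Lemma \ref{lema1} and the identity \eqref{relacion}.
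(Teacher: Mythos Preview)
Your identification that Theorem~\ref{teo1} is literally the point-value WENO accuracy result for the primitive $F$ from \cite{articulopaper} is correct, and this is exactly the paper's ``proof'': the present paper does not argue the theorem at all but simply quotes it from \cite{articulopaper} (``With these changes, in \cite{articulopaper} it is proved the following theorem''), and then derives the cell-average statement as Corollary~\ref{coro1} via Proposition~\ref{propcentral}.

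Your additional sketch of how the argument in \cite{articulopaper} presumably runs---Taylor expansion of the indicators so that $\tilde\omega^n_{k,k_1}\to C^n_{k,k_1}$ in the smooth case, suppression of crossing stencils by a factor $(h_{l-1}^2)^t=h_{l-1}^{2r}$ in the discontinuous case, and inductive unrolling of the nested convex combination onto the $r+l_0$ smooth nodes---is reasonable and likely mirrors that reference, but it goes well beyond what the present paper supplies. One small slip: your closing sentence about obtaining ``the cell-average version'' through Proposition~\ref{propcentral}, Lemma~\ref{lema1} and \eqref{relacion} is not part of proving Theorem~\ref{teo1} itself; that passage is the content of the separate Corollary~\ref{coro1}. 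The theorem as stated is already purely a point-value assertion about $F$, so no reduction from cell averages is needed here.
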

Analogously, if the discontinuity is placed in the interval $\Omega=[x_{j-l_0},x_{j-l_0-1}]$, with $1<l_0\leq r$. Using Proposition \ref{propcentral}, we directly deduce the following corollary:

\begin{corollary}\label{coro1}
Let's consider $1< l_0 \leq r-1$, $\tilde \omega_k^r(j)$ defined in Eq. \eqref{pesosr1finales} and $p^{r}_k$ with $k=0,\hdots,r-1$, the polynomials which approximate $f$ in the cell-averages at the stencils $S_{k}^r$ ($r$ cells).
 If $f$ is smooth in $[x^{l-1}_{j-r},x^{l-1}_{j+r-1}]\setminus \Omega$ and $f$ has a discontinuity at $\Omega$ then,
\begin{equation}
\left(\mathcal{D}_l\left(\sum_{k=0}^{r-1}\tilde \omega^r_k(j) p^r_k\right)\right)_{2j-1}-\bar f^l_{2j-1}=\left\{
                                                  \begin{array}{ll}
                                                    O(h_{l-1}^{2r-1}), & \hbox{if $\,\,\Omega=\emptyset$;} \\
O(h_{l-1}^{r+l_0-1}), & \hbox{if   $\,\,\Omega=[x^{l-1}_{j+l_0-1},x^{l-1}_{j+l_0}]$. }\\
                                                  \end{array}
                                                \right.
\end{equation}
\end{corollary}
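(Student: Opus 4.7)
The plan is to reduce the corollary directly to Theorem \ref{teo1} by means of Proposition \ref{propcentral}. The structural point is that the new weights $\tilde\omega^r_k(j)$ defined in \eqref{pesosr1finales} are shared by the point-value and cell-average discretizations (as emphasized right after the proof of Proposition \ref{propcentral}), so the reconstruction of the primitive $F$ in the point-value setting and the reconstruction of $f$ in the cell-average setting use the exact same convex combination of stencil contributions. Once this identification is made, Proposition \ref{propcentral} serves as the translation mechanism between the two error estimates.

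First I would set $\mathcal{I}_{l-1}(x_{2j-1}^{l}; F^{l-1}) := \sum_{k=0}^{r-1}\tilde\omega^r_k(j)\,q^r_k(x_{2j-1}^l)$. Since the $\tilde\omega^r_k(j)$ are normalized to sum to $1$ by construction, this $\mathcal{I}_{l-1}$ satisfies the hypothesis of Proposition \ref{propcentral}. Next I would invoke Theorem \ref{teo1} to obtain
\begin{equation*}
\mathcal{I}_{l-1}(x_{2j-1}^{l}; F^{l-1}) - F(x_{2j-1}^l) = O(h_{l-1}^{s+1}),
\end{equation*}
with $s+1 = 2r$ when $\Omega=\emptyset$ and $s+1 = r + l_0$ when $\Omega = [x^{l-1}_{j+l_0-1}, x^{l-1}_{j+l_0}]$. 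I would then verify that in both cases the exponent $s$ lies in the window $[r, 2r)$ demanded by Proposition \ref{propcentral}: in the smooth case $s = 2r-1 \in [r, 2r)$ whenever $r \ge 1$, and in the discontinuous case the hypothesis $1 < l_0 \le r-1$ forces $l_0 \in \{2,\dots,r-1\}$, so $s = r+l_0-1 \in \{r+1, \dots, 2r-2\} \subset [r, 2r)$.

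With these ingredients verified, Proposition \ref{propcentral} applied to the weights $\omega^r_k(j) := \tilde\omega^r_k(j)$ and the cell-average polynomials $p^r_k$ yields
\begin{equation*}
\bar f^l_{2j-1} - \left(\mathcal{D}_l\!\left(\sum_{k=0}^{r-1}\tilde\omega^r_k(j)\,p^r_k\right)\right)_{2j-1} = O(h_{l-1}^{s}),
\end{equation*}
which is precisely the asserted $O(h_{l-1}^{2r-1})$ and $O(h_{l-1}^{r+l_0-1})$ statements. There is essentially no deep obstacle, because all the hard analytical work (the accuracy of the new weights at smooth regions and their collapse to a power of $h_{l-1}$ across a singularity) has already been carried out in Theorem \ref{teo1}. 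The only delicate bookkeeping step is the compatibility check on $s$; once that lies in the admissible range, the result is obtained by a single loss of one order when passing from the primitive to its derivative, which is exactly the $h_{l-1}^{-1}$ factor that appears in the proof of Proposition \ref{propcentral}.
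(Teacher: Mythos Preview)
Your proposal is correct and follows exactly the approach the paper indicates: the paper states only that the corollary is ``directly deduced'' from Theorem~\ref{teo1} via Proposition~\ref{propcentral}, and your argument spells out precisely that deduction, including the verification that the exponent $s$ falls in the admissible range $[r,2r)$ required by Proposition~\ref{propcentral}.
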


Thus, we have constructed a new prediction operator in the cell-average setting using the primitive function with progressive order of accuracy at zones affected by discontinuities. In the next section, we apply this algorithm to the compression and interpolation of univariate and bivariate functions.

\section{Numerical experiments}\label{num_exp}
This section is dedicated to check the performance of the new WENO-($2r-1$)  algorithm versus the classical WENO-($2r-1$)  algorithm in the cell average setting. In the first subsection we will present numerical results related to the accuracy of the new algorithm close to discontinuities. In the second subsection, we will present results for multiresolution of piecewise continuous univariate functions. Finally, in the third subsection we will show some results for multiresolution and bivariate piecewise continuous functions. About the parameters $\epsilon$ and $t$ in (\ref{pesos}), we have set $\epsilon=10^{-16}$ and $t=r$ for a stencil of $2r-1$ cells.


\subsection{Numerical analysis of the order of accuracy close to discontinuities}
When trying to check the accuracy of an adaptive WENO-($2r-1$)  algorithm that works with piecewise smooth functions in the cell averages, we need to consider two cases: when a discontinuity falls at the end of one of the cells of the stencil and when it falls at some point in the middle of one cell. The accuracy that can be attained at the cell that contains the discontinuity is different in both cases, being $O(h^r)$ for the first case, as there is always one smooth sub-stencil of $r$ cells, and $O(1)$ for the second case, as there is no smooth sub-stencil available.

\begin{figure}[!ht]
\centerline{\psfig{figure=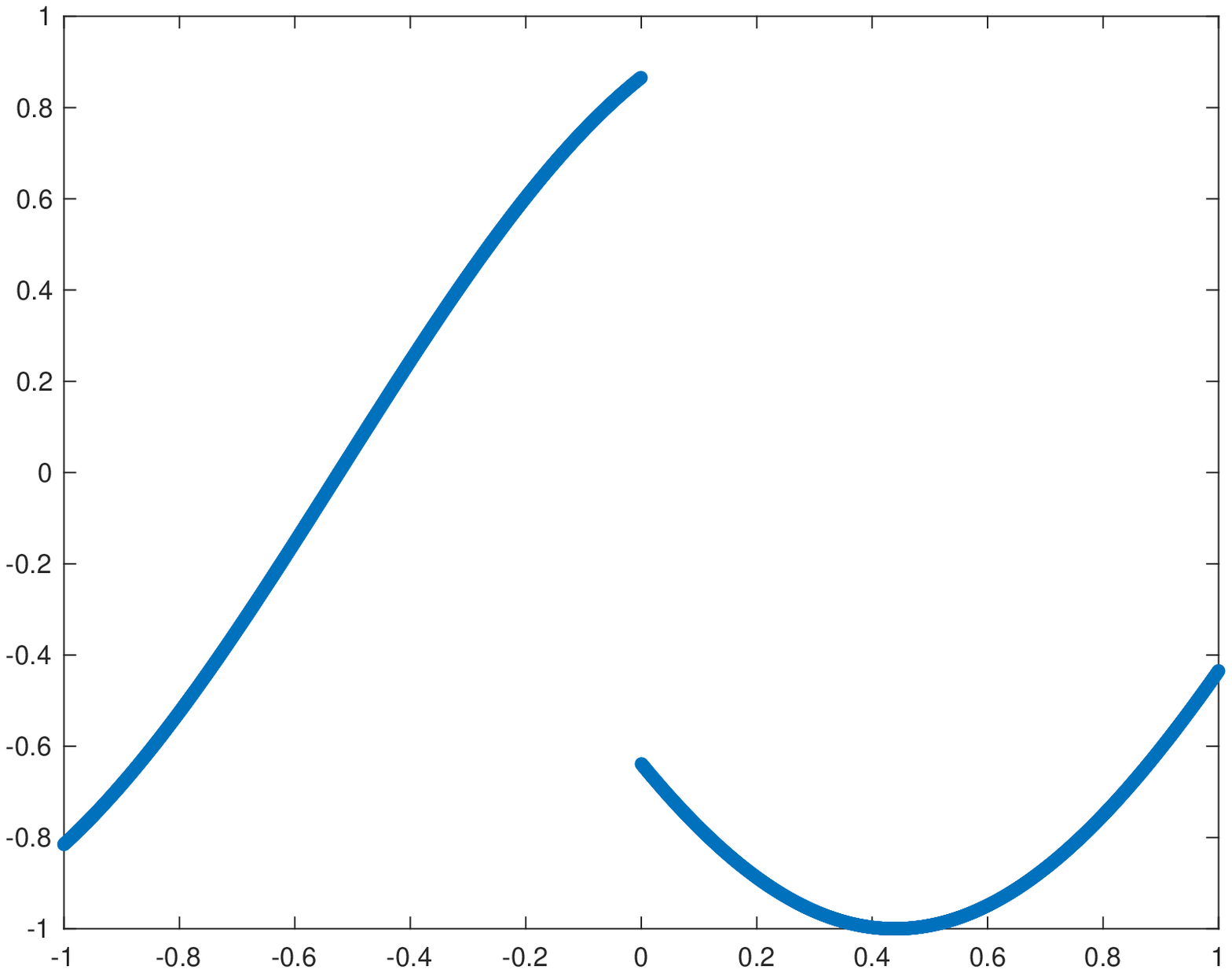,height=4.5cm}\\
\psfig{figure=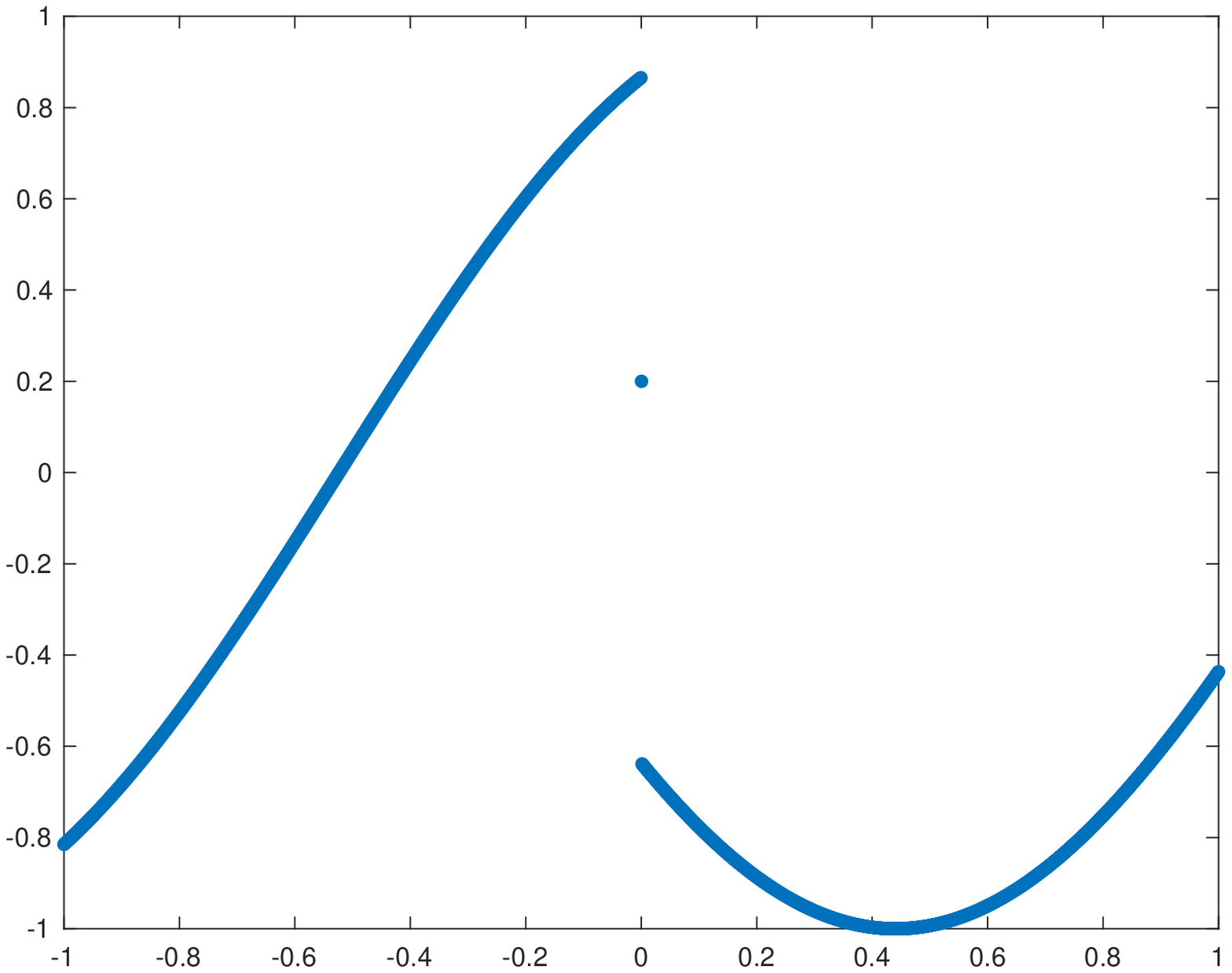,height=4.5cm}
}
\caption{In this figure we show the plot of the functions in (\ref{experimento1}) and (\ref{experimento2}) that will be used for the experiments presented in this section.}\label{funciones}
\end{figure}

{\it\bf Example 1} In this example we will consider the data presented in Figure (\ref{funciones}) to the left.
\begin{equation}\label{experimento1}
f(x)=\left\{\begin{array}{ll}
\sin(2x+\pi/3), & \textrm{ if } -1\leq x<0,\\
\sin(2x-\pi/7), & \textrm{ if } 0\leq x\leq1.
\end{array}\right.
\end{equation}
We will consider that the sampling in the point values of the function in (\ref{experimento1}) corresponds to the cell averages $\bar f^l_j$, obtained through (\ref{discretiza}), of a function $f(x)$ that is unknown. We can see that the function in (\ref{experimento1}) presents a jump. In the cell averages this fact corresponds to the case when the discontinuity falls at the end or the beginning of one cell. We can easily check the numerical accuracy attained by the algorithms that we are studying close to the discontinuities using a grid refinement analysis. We will compare the order of accuracy attained by the new WENO-($2r-1$)  algorithm for $r=3, 4, 5$ with the one obtained by the classical WENO-($2r-1$)  algorithm. The grid refinement analysis can be done following the next steps:
\begin{itemize}
\item We interpolate the initial data vector $v^i$ of $2^i$ cells and we obtain the vector $\hat{v}^{i+1} $ with $2^{i+1}$ cells.
\item The error at each spatial cell $j$ and at the resolution scale $i+1$ can be written as $e_j^{i+1}=|v^{i+1}_j-\hat{v}^{i+1}_j|$.
\item The order of accuracy at the resolution scale $i+1$ and at the cell $j$ is denoted as $o^{i+1}_j$ and computed using the following formula, $$o^{i+1}_j=\log_2\left(\frac{e^{i+1}_{2j}}{e^i_j}\right).$$
\end{itemize}
Tables \ref{tabla_ex1_1}, \ref{tabla_ex1_3} and \ref{tabla_ex1_5} show the results obtained by the classical WENO-($2r-1$)  algorithm for $r=3, 4, 5$ and for the data obtained from (\ref{experimento1}). In these tables we can see that at the places where the errors and orders have been written in bold there is enough information to obtain an improved accuracy. The reason is that classical WENO-($2r-1$)  algorithm is not designed to optimize the accuracy when the stencil is affected by a discontinuity. Tables \ref{tabla_ex1_2}, \ref{tabla_ex1_4} and \ref{tabla_ex1_6} show the results obtained by the new algorithm. In this case we can also observe a reduction of the accuracy close to the discontinuity. Even so, we can see that reduction of accuracy is adjusted to the number of smooth cells available, so the accuracy close to the discontinuity has been clearly improved if we compare it with the results obtained by the classical WENO-($2r-1$)  algorithm. It is also important to mention that, in this particular case (due to the position of the discontinuity at the end of one cell), the minimum accuracy reached by both algorithms close to the discontinuity is $O(h^{r})$. The last cells to the right of Tables \ref{tabla_ex1_1}, \ref{tabla_ex1_2}, \ref{tabla_ex1_3}, \ref{tabla_ex1_4} and \ref{tabla_ex1_5}  are dedicated to present the computational time (in seconds) obtained by each algorithm. In order to obtain the computational time, we have executed the algorithm 1000 times and computed the mean of all the results. We can see that the computational cost is similar for both algorithms.

\begin{table}[!ht]
\begin{center}
\resizebox{15cm}{!} {
\begin{tabular}{|c|c|c|c|c|c|c|c|c|c|c|c|c|c|c|c|c|c|c|c|c|c|c|c|c|c|c|c|}
\hline\multicolumn{1}{|c|}{ }& \multicolumn{2}{|c|}{$x_{2j-6}$} & \multicolumn{2}{|c|}{$x_{2j-4}$}  & \multicolumn{2}{|c|}{$x_{2j-2}$}& \multicolumn{2}{|c|}{$x_{2j}$}& \multicolumn{2}{|c|}{$x_{2j+2}$}& \multicolumn{2}{|c|}{$x_{2j+4}$} &\multirow{ 2}{*}{Comp. t. (s)}
              \\
\cline{1-13} $i$ & $e^i_{2j-6}$ & $o^i_{2j-6}$ & $e^i_{2j-4}$ & $o^i_{2j-4}$ & $e^i_{2j-2}$ & $o^i_{2j-2}$ & $e^i_{2j}$ & $o^i_{2j}$ & $e^i_{2j+2}$ & $o^i_{2j+2}$ & $e^i_{2j+4}$ & $o^i_{2j+4}$ &
              \\
\hline 4& 8.367e-05 & - & 2.254e-04 & - & 9.116e-04 & - & 6.402e-04 & - & 2.148e-04 & - & 1.065e-05 & - &2.445e-04
            \\
\hline 5&2.131e-06 & 5.295 & 2.148e-05 & 3.391 &  9.667e-05 & 3.237 &  1.002e-04  & 2.675 &  2.670e-05  & 3.008& 1.419e-06 & 2.908 &2.287e-04
            \\
\hline 6&5.762e-08 & 5.209 & 2.407e-06 & 3.158 &  1.086e-05 & 3.154 &  1.367e-05  & 2.874 &  3.323e-06  & 3.006& 6.079e-08 & 4.544 &3.330e-04
            \\
\hline 7&1.641e-09 & 5.134 & 2.871e-07 & 3.068 &  1.277e-06 & 3.088 &  1.775e-06  & 2.945 &  4.183e-07  & 2.990& 2.089e-09 & 4.863 &5.655e-04
            \\
\hline 8&4.854e-11 & 5.079 & 3.512e-08 & 3.031 &  1.544e-07 & 3.047 &  2.259e-07  & 2.974 &  5.261e-08  & 2.991& 6.784e-11 & 4.944 & 8.927e-04
            \\
\hline 9&1.472e-12 & 5.044 & 4.347e-09 & 3.015 &  1.898e-08 & 3.024 &  2.849e-08  & 2.987 &  6.602e-09  & 2.994& 2.158e-12 & 4.975 & 1.475e-03
            \\
\hline 10&{ 4.530e-14} & { 5.022} & {\bf 5.407e-10} & {\bf 3.007} &  2.352e-09 & 3.012 &  3.576e-09  & 2.994 &  {\bf 8.269e-10}  & {\bf 2.997}& { 6.783e-14} & {4.991} & 2.652e-03
	    \\
\hline
\end{tabular}
}
\caption{In this table we present a grid refinement analysis for the classical implementation of WENO-6 algorithm. The data has been obtained from the sampling of the function in (\ref{experimento1}), interpreting that this data comes from a cell average discretization of a piecewise continuous and unknown function. In this case, the discontinuity falls at the end of one of the cells.}\label{tabla_ex1_1}
\end{center}
\end{table}

\begin{table}[!ht]
\begin{center}
\resizebox{15cm}{!} {
\begin{tabular}{|c|c|c|c|c|c|c|c|c|c|c|c|c|c|c|c|c|c|c|c|c|c|c|c|c|c|c|c|}
\hline\multicolumn{1}{|c|}{ }& \multicolumn{2}{|c|}{$x_{2j-6}$} & \multicolumn{2}{|c|}{$x_{2j-4}$}  & \multicolumn{2}{|c|}{$x_{2j-2}$}& \multicolumn{2}{|c|}{$x_{2j}$}& \multicolumn{2}{|c|}{$x_{2j+2}$}& \multicolumn{2}{|c|}{$x_{2j+4}$} &\multirow{ 2}{*}{Comp. t. (s)}
              \\
\cline{1-13} $i$ & $e^i_{2j-6}$ & $o^i_{2j-6}$ & $e^i_{2j-4}$ & $o^i_{2j-4}$ & $e^i_{2j-2}$ & $o^i_{2j-2}$ & $e^i_{2j}$ & $o^i_{2j}$ & $e^i_{2j+2}$ & $o^i_{2j+2}$ & $e^i_{2j+4}$ & $o^i_{2j+4}$ &
              \\
\hline 4&  8.228e-05 & - & 6.612e-07 & - & 9.117e-04 & - & 6.396e-04 & - & 1.215e-04 & - & 2.508e-05 & - &3.422e-04
            \\
\hline 5&2.097e-06 & 5.294 & 2.760e-06 & -2.062 &  9.667e-05 & 3.237 &  1.002e-04  & 2.674 &  6.475e-06  & 4.231& 1.345e-06 & 4.221 &2.213e-04
            \\
\hline 6&5.719e-08 & 5.196 & 2.439e-07 & 3.501 &  1.086e-05 & 3.154 &  1.367e-05  & 2.874 &  3.228e-07  & 4.326& 6.041e-08 & 4.476 &3.491e-04
            \\
\hline 7&1.637e-09 & 5.127 & 1.736e-08 & 3.812 &  1.277e-06 & 3.088 &  1.775e-06  & 2.945 &  1.729e-08  & 4.222& 2.086e-09 & 4.856 & 6.525e-04
            \\
\hline 8&4.851e-11 & 5.077 & 1.149e-09 & 3.917 &  1.544e-07 & 3.047 &  2.259e-07  & 2.974 &  9.877e-10  & 4.130& 6.783e-11 & 4.943 &1.020e-03
            \\
\hline 9&1.471e-12 & 5.043 & 7.381e-11 & 3.961 &  1.898e-08 & 3.024 &  2.849e-08  & 2.987 &  5.878e-11  & 4.071& 2.158e-12 & 4.974 & 1.721e-03
            \\
\hline 10&{4.530e-14} & {5.022} & {\bf4.674e-12} & {\bf 3.981} &  2.352e-09 & 3.012 &  3.576e-09  & 2.994 &  {\bf3.581e-12 } & {\bf 4.037}& { 6.783e-14} & { 4.991} &3.331e-03
	    \\
\hline
\end{tabular}
}
\caption{In this table we present a grid refinement analysis for the new implementation of WENO-6 algorithm. The data has been obtained from the sampling of the function in (\ref{experimento1}), interpreting that this data comes from a cell average discretization of a piecewise continuous and unknown function. In this case, the discontinuity falls at the end of one of the cells.}\label{tabla_ex1_2}
\end{center}
\end{table}


\begin{sidewaystable}
  \centering
\resizebox{16cm}{!} {
\begin{tabular}{|c|c|c|c|c|c|c|c|c|c|c|c|c|c|c|c|c|c|c|c|c|c|c|c|c|c|c|c}
\hline\multicolumn{1}{|c|}{ }&\multicolumn{2}{|c|}{$x_{2j-8}$}& \multicolumn{2}{|c|}{$x_{2j-6}$} & \multicolumn{2}{|c|}{$x_{2j-4}$}  & \multicolumn{2}{|c|}{$x_{2j-2}$}& \multicolumn{2}{|c|}{$x_{2j}$}& \multicolumn{2}{|c|}{$x_{2j+2}$}& \multicolumn{2}{|c|}{$x_{2j+4}$}&  \multicolumn{2}{|c|}{$x_{2j+6}$} &\multirow{ 2}{*}{Comp. t. (s)}
              \\
\cline{1-17} $i$ &$e^i_{2j-8}$ & $o^i_{2j-8}$ & $e^i_{2j-6}$ & $o^i_{2j-6}$ & $e^i_{2j-4}$ & $o^i_{2j-4}$ & $e^i_{2j-2}$ & $o^i_{2j-2}$ & $e^i_{2j}$ & $o^i_{2j}$ & $e^i_{2j+2}$ & $o^i_{2j+2}$ & $e^i_{2j+4}$ & $o^i_{2j+4}$  & $e^i_{2j+6}$ & $o^i_{2j+6}$ &
              \\

\hline 4& 2.554e-07 & - & 9.822e-06 & - & 3.348e-05 & - & 1.215e-04 & - & 1.918e-04 & - & 5.219e-05 & - & 5.993e-06 & - &  9.656e-08 & - &2.994e-04
            \\
\hline 5& 6.463e-09 & 5.304 &6.978e-07 & 3.815 & 2.535e-06 & 3.723 &  9.835e-06 & 3.626 &  1.054e-05  & 4.185 &  2.600e-06  & 4.327& 6.484e-07 & 3.208 & 2.510e-08 & 1.944 &2.699e-04
            \\
\hline 6& 8.849e-11 & 6.191 &4.599e-08 & 3.923 & 1.705e-07 & 3.895 &  6.737e-07 & 3.868 &  6.002e-07  & 4.135 &  1.480e-07  & 4.135& 3.870e-08 & 4.067 & 1.099e-10 & 7.835&3.972e-04
            \\
\hline 7& 9.164e-13 & 6.593 &2.947e-09 & 3.964 & 1.100e-08 & 3.954 &  4.375e-08 & 3.945 &  3.548e-08  & 4.080 &  8.798e-09  & 4.072& 2.325e-09 & 4.057 & 6.374e-13 & 7.430 &6.571e-04
            \\
\hline 8& { 8.660e-15} & { 6.725} &{\bf1.864e-10} &{\bf 3.983} & 6.974e-10 & 3.979 &  2.783e-09 & 3.975 &  2.151e-09  & 4.044 &  5.355e-10  & 4.038&{\bf 1.421e-10} & {\bf4.032}& { 5.107e-15} & { 6.964} &1.139e-03
            \\
\hline 9& 1.776e-15 & 2.285 &1.172e-11 & 3.992 & {\bf 4.389e-11} & {\bf 3.990} &  1.754e-10 & 3.988 &  1.324e-10  & 4.023 &  3.302e-11  & 4.020& 8.786e-12 & 4.016 & 1.887e-15 & 1.436 & 1.965e-03
            \\
\hline 10& 1.887e-15 & -0.087 &7.431e-13 &  3.979 & 2.748e-12 & 3.997 &  1.100e-11 & 3.994 &  8.208e-12  & 4.011 & {\bf 2.038e-12}  & {\bf 4.018}& 5.470e-13 & { 4.005} & 1.443e-15 & 0.387 &3.457e-03
	    \\
\hline
\end{tabular}
}
\captionof{table}{In this table we present a grid refinement analysis for the classical implementation of WENO-8 algorithm. The data has been obtained from the sampling of the function in (\ref{experimento1}), interpreting that this data comes from a cell average discretization of a piecewise continuous and unknown function. In this case, the discontinuity falls at the end of one of the cells.}\label{tabla_ex1_3}
\vspace{0.5cm}

\resizebox{16cm}{!} {
\begin{tabular}{|c|c|c|c|c|c|c|c|c|c|c|c|c|c|c|c|c|c|c|c|c|c|c|c|c|c|c|c|}
\hline\multicolumn{1}{|c|}{ }&\multicolumn{2}{|c|}{$x_{2j-8}$}& \multicolumn{2}{|c|}{$x_{2j-6}$} & \multicolumn{2}{|c|}{$x_{2j-4}$}  & \multicolumn{2}{|c|}{$x_{2j-2}$}& \multicolumn{2}{|c|}{$x_{2j}$}& \multicolumn{2}{|c|}{$x_{2j+2}$}& \multicolumn{2}{|c|}{$x_{2j+4}$}&  \multicolumn{2}{|c|}{$x_{2j+6}$} &\multirow{ 2}{*}{Comp. t. (s)}
              \\
\cline{1-17} $i$ &$e^i_{2j-8}$ & $o^i_{2j-8}$ & $e^i_{2j-6}$ & $o^i_{2j-6}$ & $e^i_{2j-4}$ & $o^i_{2j-4}$ & $e^i_{2j-2}$ & $o^i_{2j-2}$ & $e^i_{2j}$ & $o^i_{2j}$ & $e^i_{2j+2}$ & $o^i_{2j+2}$ & $e^i_{2j+4}$ & $o^i_{2j+4}$  & $e^i_{2j+6}$ & $o^i_{2j+6}$ &
              \\
\hline 4&  3.031e-07 & - & 1.312e-07 & - & 1.183e-05 & - & 1.215e-04 & - & 1.918e-04 & - & 5.764e-06 & - & 9.661e-06 & - &  8.490e-08 & -  &2.865e-04
            \\
\hline 5& 6.649e-09 & 5.510 &6.568e-09 & 4.320 & 2.980e-07 & 5.310 &  9.835e-06 & 3.626 &  1.054e-05  & 4.185 &  1.972e-07  & 4.869& 4.340e-08 & 7.798 & 2.542e-08 & 1.740&2.966e-04
            \\
\hline 6& 8.904e-11 & 6.223 &1.459e-10 & 5.493 & 8.003e-09 & 5.219 &  6.737e-07 & 3.868 &  6.002e-07  & 4.135 &  8.572e-09  & 4.524& 3.336e-10 & 7.024 & 1.105e-10 & 7.845 & 4.343e-04
            \\
\hline 7& 9.177e-13 & 6.600 &2.889e-12 & 5.658 & 2.283e-10 & 5.131 &  4.375e-08 & 3.945 &  3.548e-08  & 4.080 &  2.935e-10  & 4.868& 3.825e-12 & 6.446 & 6.385e-13  & 7.436 &7.337e-04
            \\
\hline 8& { 8.660e-15} &{6.728} &{\bf 5.196e-14} & {\bf 5.797} & 6.773e-12 & 5.075 &  2.783e-09 & 3.975 &  2.151e-09  & 4.044 &  9.515e-12  & 4.947& {\bf 5.063e-14} & {\bf 6.239} & {4.219e-15} & {7.242} &1.305e-03
            \\
\hline 9& 1.776e-15 & 2.285 &4.441e-16 & 6.870 & {\bf 2.052e-13} & {\bf 5.045} &  1.754e-10 & 3.988 &  1.324e-10  & 4.023 &  3.046e-13  & 4.965& 3.220e-15 & 3.975 & 1.110e-16 & 5.248 &2.221e-03
            \\
\hline 10& 5.218e-15 & -1.555 &4.108e-15 & -3.209 & 1.665e-15 & 6.945 &  1.100e-11 & 3.994 &  8.208e-12  & 4.011 &  {\bf 8.105e-15}  & {\bf 5.232}& 6.994e-15 & -1.119 & 5.662e-15 & -5.672&4.078e-03
	    \\
\hline
\end{tabular}
}
\caption{In this table we present a grid refinement analysis for the new implementation of WENO-8 algorithm. The data has been obtained from the sampling of the function in (\ref{experimento1}), interpreting that this data comes from a cell average discretization of a piecewise continuous and unknown function. In this case, the discontinuity falls at the end of one of the cells.}\label{tabla_ex1_4}

\vspace{0.5cm}

\resizebox{17cm}{!} {
\begin{tabular}{|c|c|c|c|c|c|c|c|c|c|c|c|c|c|c|c|c|c|c|c|c|c|c|c|c|c|c|c}
\hline\multicolumn{1}{|c|}{ }&\multicolumn{2}{|c|}{$x_{2j-10}$} &\multicolumn{2}{|c|}{$x_{2j-8}$}& \multicolumn{2}{|c|}{$x_{2j-6}$} & \multicolumn{2}{|c|}{$x_{2j-4}$}  & \multicolumn{2}{|c|}{$x_{2j-2}$}& \multicolumn{2}{|c|}{$x_{2j}$}& \multicolumn{2}{|c|}{$x_{2j+2}$}& \multicolumn{2}{|c|}{$x_{2j+4}$}&  \multicolumn{2}{|c|}{$x_{2j+6}$} & \multicolumn{2}{|c|}{$x_{2j+8}$} &\multirow{ 2}{*}{Comp. t. (s)}  
              \\
\cline{1-21} $i$ &$e^i_{2j-10}$ & $o^i_{2j-10}$ &$e^i_{2j-8}$ & $o^i_{2j-8}$ & $e^i_{2j-6}$ & $o^i_{2j-6}$ & $e^i_{2j-4}$ & $o^i_{2j-4}$ & $e^i_{2j-2}$ & $o^i_{2j-2}$ & $e^i_{2j}$ & $o^i_{2j}$ & $e^i_{2j+2}$ & $o^i_{2j+2}$ & $e^i_{2j+4}$ & $o^i_{2j+4}$  & $e^i_{2j+6}$ & $o^i_{2j+6}$  & $e^i_{2j+8}$ & $o^i_{2j+8}$& 
              \\
\hline 4& 1.758e-03 & - & 6.248e-07 & - & 2.681e-06 & - & 8.016e-06 & - & 3.495e-05 & - & 1.268e-05 & - & 3.259e-06 & - & 1.227e-06 & - &  3.166e-07 & - &  1.421e-03-01  & -&3.014e-04
            \\
\hline 5&5.335e-11 & 24.974 & 1.689e-08 & 5.209 &6.826e-08 & 5.296 & 2.042e-07 & 5.295 &  8.979e-07 & 5.283 &  7.086e-07  & 4.162 &  1.682e-07  & 4.277& 5.872e-08 & 4.385 & 1.518e-08 & 4.383 &3.916e-12 & 28.435  &2.649e-04
            \\
\hline 6&{8.660e-14} & {9.267} & 4.631e-10 & 5.189 &1.849e-09 & 5.206 & 5.499e-09 & 5.215 &  2.409e-08 & 5.220 &  2.644e-08  & 4.744 &  6.164e-09  & 4.770& 2.118e-09 & 4.793 & 5.418e-10 & 4.808 &6.606e-14 & 5.890& 4.217e-04
            \\
\hline 7&7.772e-16 & 6.800 & {\bf 1.336e-11} & {\bf 5.115} &{\bf 5.305e-11} & {\bf 5.124} & 1.569e-10 & 5.131 &  6.845e-10 & 5.137 &  8.871e-10  & 4.897 &  2.056e-10  & 4.906& {\bf 7.027e-11} & {\bf 4.914} & {\bf 1.790e-11} & {\bf 4.920} &{1.110e-16} &{9.217}& 7.210e-04
            \\
\hline 8&2.220e-16 & 1.807 & 3.990e-13 & 5.066 &1.581e-12 & 5.068 & {\bf 4.665e-12} & {\bf 5.072} &  2.028e-11 & 5.077 &  2.862e-11  & 4.954 &  {\bf 6.620e-12}  & {\bf 4.957}& 2.257e-12 & 4.961 & 5.731e-13 & 4.965 &3.331e-16 & -1.585&1.250e-03
            \\
\hline 9&1.332e-15 & -2.585 & 1.421e-14 & 4.811 &4.574e-14 & 5.112 & 1.412e-13 & 5.046 &  6.202e-13 & 5.031 &  9.130e-13  & 4.970 &  2.123e-13  & 4.963& 6.783e-14 & 5.056 & 1.765e-14 & 5.021 &1.998e-15 & -2.585 &2.176e-03
            \\
\hline 10&9.992e-15 & -2.907 & 1.887e-15 & 2.913 &4.108e-15 & 3.477 & 1.665e-15 & 6.406 &  1.799e-14 & 5.108 &  2.043e-14  & 5.482 &  8.105e-15  & 4.711& 6.994e-15 & 3.278 & 1.443e-15 & 3.612 &4.441e-16 & 2.170 &3.856e-03
	    \\
\hline
\end{tabular}
}
\caption{In this table we present a grid refinement analysis for the classical implementation of WENO-10 algorithm. The data has been obtained from the sampling of the function in (\ref{experimento1}), interpreting that this data comes from a cell average discretization of a piecewise continuous and unknown function. In this case, the discontinuity falls at the end of one of the cells.}\label{tabla_ex1_5}
\vspace{0.5cm}

\resizebox{17cm}{!} {
\begin{tabular}{|c|c|c|c|c|c|c|c|c|c|c|c|c|c|c|c|c|c|c|c|c|c|c|c|c|c|c|c|}
\hline\multicolumn{1}{|c|}{ }&\multicolumn{2}{|c|}{$x_{2j-10}$} &\multicolumn{2}{|c|}{$x_{2j-8}$}& \multicolumn{2}{|c|}{$x_{2j-6}$} & \multicolumn{2}{|c|}{$x_{2j-4}$}  & \multicolumn{2}{|c|}{$x_{2j-2}$}& \multicolumn{2}{|c|}{$x_{2j}$}& \multicolumn{2}{|c|}{$x_{2j+2}$}& \multicolumn{2}{|c|}{$x_{2j+4}$}&  \multicolumn{2}{|c|}{$x_{2j+6}$} & \multicolumn{2}{|c|}{$x_{2j+8}$} &\multirow{ 2}{*}{Comp. t. (s)}   
              \\
\cline{1-21} $i$ &$e^i_{2j-10}$ & $o^i_{2j-10}$ &$e^i_{2j-8}$ & $o^i_{2j-8}$ & $e^i_{2j-6}$ & $o^i_{2j-6}$ & $e^i_{2j-4}$ & $o^i_{2j-4}$ & $e^i_{2j-2}$ & $o^i_{2j-2}$ & $e^i_{2j}$ & $o^i_{2j}$ & $e^i_{2j+2}$ & $o^i_{2j+2}$ & $e^i_{2j+4}$ & $o^i_{2j+4}$  & $e^i_{2j+6}$ & $o^i_{2j+6}$  & $e^i_{2j+8}$ & $o^i_{2j+8}$& 
              \\
\hline 4& 1.758e-03 & - & 2.471e-08 & - & 1.451e-07 & - & 6.938e-07 & - & 3.495e-05 & - & 1.268e-05 & - & 2.086e-06 & - & 1.230e-08 & - &  5.829e-08 & - &  1.421e-03  & - &3.270e-04
            \\
\hline 5&5.266e-11 & 24.993 & 9.451e-12 & 11.352 &1.015e-09 & 7.160 & 2.153e-08 & 5.010 &  8.979e-07 & 5.283 &  7.086e-07  & 4.162 &  2.886e-08  & 6.175& 5.400e-10 & 4.509 & 7.647e-11 & 9.574 &1.265e-12 & 30.066 &3.214e-04
            \\
\hline 6&{8.660e-14} & {9.248} & 1.049e-13 & 6.493 &6.783e-12 & 7.225 & 4.019e-10 & 5.744 &  2.409e-08 & 5.220 &  2.644e-08  & 4.744 &  4.002e-10  & 6.172& 6.233e-12 & 6.437 & 2.708e-13 & 8.142 &6.428e-14 & 4.298& 5.006e-04
            \\
\hline 7&3.331e-16 & 8.022 & {\bf 6.661e-16} & {\bf 7.299} &{\bf 4.707e-14} & {\bf 7.171} & 6.721e-12 & 5.902 &  6.845e-10 & 5.137 &  8.871e-10  & 4.897 &  5.771e-12  & 6.116& {\bf 5.596e-14} & {\bf 6.800} & {\bf 7.772e-16} & {\bf 8.445} &{1.110e-16} &{9.177}&8.985e-04
            \\
\hline 8&2.220e-16 & 0.585 & 2.220e-16 & 1.585 &4.441e-16 & 6.728 & {\bf 1.082e-13} & {\bf 5.956} &  2.028e-11 & 5.077 &  2.862e-11  & 4.954 &  {\bf 8.626e-14}  & {\bf 6.064}& 8.882e-16 & 5.972 & 6.661e-16 & 0.415 &3.331e-16 & 1.222& 1.606e-03
            \\
\hline 9&2.220e-15 & -3.322 & 3.553e-15 & -4.000 &4.441e-16 & 0.000 & 7.994e-15 & 3.759 &  6.202e-13 & 5.031 &  9.130e-13  & 4.970 &  2.665e-15  & 5.017& 3.331e-16 & 1.415 & 1.887e-15 & -3.087 &3.775e-15 & -2.766 & 2.898e-03
            \\
\hline 10&2.887e-15 & -0.379 & 1.887e-15 & 0.913 &2.998e-15 & -2.755 & 1.665e-15 & 2.263 &  1.799e-14 & 5.108 &  2.043e-14  & 5.482 &  9.992e-16  & 1.415& 6.994e-15 & -4.392 & 8.549e-15 & -2.179 &7.550e-15 & -1.000&5.314e-03
	    \\
\hline
\end{tabular}
}
\caption{In this table we present a grid refinement analysis for the new implementation of WENO-10 algorithm. The data has been obtained from the sampling of the function in (\ref{experimento1}), interpreting that this data comes from a cell average discretization of a piecewise continuous and unknown function. In this case, the discontinuity falls at the end of one of the cells.}\label{tabla_ex1_6}
\end{sidewaystable}
\pagebreak

{\it\bf Example 2} Let's continue with the function plotted in Figure (\ref{funciones}) to the right,
\begin{equation}\label{experimento2}
f(x)=\left\{\begin{array}{ll}
\sin(2x+\pi/3), & \textrm{ if } -1\leq x<0,\\
0.2, & \textrm{ if } x=0,\\
\sin(2x-\pi/7), & \textrm{ if } 0< x\leq1.
\end{array}\right.
\end{equation}
In this case, the discontinuity is placed at some point in the middle of one cell and the minimum accuracy attained by both algorithms is $O(1)$. Tables \ref{tabla_ex2_1}, \ref{tabla_ex2_2}, \ref{tabla_ex2_3}, \ref{tabla_ex2_4}, \ref{tabla_ex2_5} and \ref{tabla_ex2_6} show the results of this experiment. Apart from the fact that the spatial pattern of orders of accuracy changes due to the position of the discontinuity, the conclusions are the same as in the previous experiment. The classical WENO-$(2r-1)$ algorithm does not optimize the order of accuracy while the new WENO-$(2r-1)$ algorithm optimizes the order of accuracy and at the same time provides an essentially non oscillatory result. The computational cost is similar for both algorithms.

\begin{table}[!ht]
\begin{center}
\resizebox{15cm}{!} {
\begin{tabular}{|c|c|c|c|c|c|c|c|c|c|c|c|c|c|c|c|c|c|c|c|c|c|c|c|c|c|c|c|c|c|}
\hline\multicolumn{1}{|c|}{ }& \multicolumn{2}{|c|}{$x_{2j-6}$} & \multicolumn{2}{|c|}{$x_{2j-4}$}  & \multicolumn{2}{|c|}{$x_{2j-2}$}& \multicolumn{2}{|c|}{$x_{2j}$}& \multicolumn{2}{|c|}{$x_{2j+2}$}& \multicolumn{2}{|c|}{$x_{2j+4}$} & \multicolumn{2}{|c|}{$x_{2j+6}$} &\multirow{ 2}{*}{Comp. t. (s)}
              \\
\cline{1-15} $i$ & $e^i_{2j-6}$ & $o^i_{2j-6}$ & $e^i_{2j-4}$ & $o^i_{2j-4}$ & $e^i_{2j-2}$ & $o^i_{2j-2}$ & $e^i_{2j}$ & $o^i_{2j}$ & $e^i_{2j+2}$ & $o^i_{2j+2}$ & $e^i_{2j+4}$ & $o^i_{2j+4}$  & $e^i_{2j+6}$ & $o^i_{2j+6}$ &
              \\
\hline 4&  8.367e-05 & - & 2.254e-04 & - & 9.116e-04 & - & 3.445e-03 & - & 4.408e-04 & - & 1.611e-04 & - &  1.071e-06 & - & 2.269e-04
            \\
\hline 5&2.131e-06 & 5.295 & 2.148e-05 & 3.391 &  9.667e-05 & 3.237 &  8.075e-03  & -1.229 &  1.047e-03  & -1.248& 3.546e-04 & -1.138 & 1.024e-06 & 0.064 & 2.096e-04
            \\
\hline 6&5.762e-08 & 5.209 & 2.407e-06 & 3.158 &  1.086e-05 & 3.154 &  1.634e-02  & -1.017 &  8.523e-04  & 0.297& 6.241e-04 & -0.815 & 5.625e-08 & 4.187& 3.061e-04
            \\
\hline 7&1.641e-09 & 5.134 & 2.871e-07 & 3.068 &  1.277e-06 & 3.088 &  3.270e-02  & -1.001 &  2.225e-05  & 5.259& 7.267e-06 & 6.424 & 2.028e-09 & 4.794 &5.187e-04
            \\
\hline 8&4.854e-11 & 5.079 & 3.512e-08 & 3.031 &  1.544e-07 & 3.047 &  6.540e-02  & -1.000 &  2.522e-07  & 6.463& 5.504e-08 & 7.045 & 6.700e-11 & 4.919 & 8.786e-04
            \\
\hline 9&1.472e-12 & 5.044 & 4.347e-09 & 3.015 &  1.898e-08 & 3.024 &  1.306e-01  & -0.998 &  2.836e-08  & 3.153& 6.570e-09 & 3.067 & 2.147e-12 & 4.964 & 1.532e-03
            \\
\hline 10&4.530e-14 & 5.022 & {\bf 5.407e-10} & {\bf 3.007} &  2.352e-09 & 3.012 &  2.497e-01  & -0.935 &  3.570e-09  & 2.990& {\bf 8.256e-10} & {\bf 2.993} & 6.806e-14 & 4.979 &2.655e-03
	    \\
\hline
\end{tabular}
}
\caption{In this table we present a grid refinement analysis for the classical implementation of WENO-6 algorithm. The data has been obtained from the sampling of the function in (\ref{experimento2}), interpreting that this data comes from a cell average discretization of a piecewise continuous and unknown function. In this case, the discontinuity falls at some point in the middle of the cell that contains de discontinuity.}\label{tabla_ex2_1}
\end{center}
\end{table}

\begin{table}[!ht]
\begin{center}
\resizebox{15cm}{!} {
\begin{tabular}{|c|c|c|c|c|c|c|c|c|c|c|c|c|c|c|c|c|c|c|c|c|c|c|c|c|c|c|c|c|c}
\hline\multicolumn{1}{|c|}{ }& \multicolumn{2}{|c|}{$x_{2j-6}$} & \multicolumn{2}{|c|}{$x_{2j-4}$}  & \multicolumn{2}{|c|}{$x_{2j-2}$}& \multicolumn{2}{|c|}{$x_{2j}$}& \multicolumn{2}{|c|}{$x_{2j+2}$}& \multicolumn{2}{|c|}{$x_{2j+4}$} & \multicolumn{2}{|c|}{$x_{2j+6}$} &\multirow{ 2}{*}{Comp. t. (s)}
              \\
\cline{1-15} $i$ & $e^i_{2j-6}$ & $o^i_{2j-6}$ & $e^i_{2j-4}$ & $o^i_{2j-4}$ & $e^i_{2j-2}$ & $o^i_{2j-2}$ & $e^i_{2j}$ & $o^i_{2j}$ & $e^i_{2j+2}$ & $o^i_{2j+2}$ & $e^i_{2j+4}$ & $o^i_{2j+4}$  & $e^i_{2j+6}$ & $o^i_{2j+6}$ &
              \\
\hline 4& 8.228e-05 & - & 6.612e-07 & - & 9.117e-04 & - & 3.446e-03 & - & 4.265e-04 & - & 1.650e-04 & - &  1.717e-06 & - &3.089e-04
            \\
\hline 5&2.097e-06 & 5.294 & 2.760e-06 & -2.062 &  9.667e-05 & 3.237 &  8.075e-03  & -1.229 &  7.950e-04  & -0.899& 3.935e-04 & -1.254 & 9.221e-07 & 0.897 & 2.632e-04
            \\
\hline 6&5.719e-08 & 5.196 & 2.439e-07 & 3.501 &  1.086e-05 & 3.154 &  1.634e-02  & -1.017 &  4.948e-04  & 0.684& 6.340e-04 & -0.688 & 5.581e-08 & 4.046 &3.370e-04
            \\
\hline 7&1.637e-09 & 5.127 & 1.736e-08 & 3.812 &  1.277e-06 & 3.088 &  3.270e-02  & -1.001 &  1.201e-05  & 5.365& 7.138e-08 & 13.117 & 2.025e-09 & 4.784& 5.917e-04
            \\
\hline 8&4.851e-11 & 5.077 & 1.149e-09 & 3.917 &  1.544e-07 & 3.047 &  6.540e-02  & -1.000 &  2.377e-07  & 5.658& 1.001e-09 & 6.155 & 6.699e-11 & 4.918 &1.002e-03
            \\
\hline 9&1.471e-12 & 5.043 & 7.381e-11 & 3.961 &  1.898e-08 & 3.024 &  1.307e-01  & -0.999 &  2.835e-08  & 3.068& 5.916e-11 & 4.081 & 2.147e-12 & 4.964 &1.732e-03
            \\
\hline 10&4.530e-14 & 5.022 & {\bf 4.674e-12} & {\bf 3.981} &  2.352e-09 & 3.012 &  2.546e-01  & -0.962 &  3.570e-09  & 2.989& {\bf 3.589e-12} & {\bf 4.043} & 6.806e-14 & 4.974 &3.267e-03
	    \\
\hline
\end{tabular}
}
\caption{In this table we present a grid refinement analysis for the new implementation of WENO-6 algorithm. The data has been obtained from the sampling of the function in (\ref{experimento2}), interpreting that this data comes from a cell average discretization of a piecewise continuous and unknown function. In this case, the discontinuity falls at some point in the middle of the cell that contains de discontinuity.}\label{tabla_ex2_2}
\end{center}
\end{table}

\begin{sidewaystable}
  \centering
\resizebox{18cm}{!} {
\begin{tabular}{|c|c|c|c|c|c|c|c|c|c|c|c|c|c|c|c|c|c|c|c|c|c|c|c|c|c|c|c|c|c|}
\hline\multicolumn{1}{|c|}{ }&\multicolumn{2}{|c|}{$x_{2j-8}$}& \multicolumn{2}{|c|}{$x_{2j-6}$} & \multicolumn{2}{|c|}{$x_{2j-4}$}  & \multicolumn{2}{|c|}{$x_{2j-2}$}& \multicolumn{2}{|c|}{$x_{2j}$}& \multicolumn{2}{|c|}{$x_{2j+2}$}& \multicolumn{2}{|c|}{$x_{2j+4}$}&  \multicolumn{2}{|c|}{$x_{2j+6}$} &  \multicolumn{2}{|c|}{$x_{2j+8}$} &\multirow{ 2}{*}{Comp. t. (s)}
              \\
\cline{1-19} $i$ &$e^i_{2j-8}$ & $o^i_{2j-8}$ & $e^i_{2j-6}$ & $o^i_{2j-6}$ & $e^i_{2j-4}$ & $o^i_{2j-4}$ & $e^i_{2j-2}$ & $o^i_{2j-2}$ & $e^i_{2j}$ & $o^i_{2j}$ & $e^i_{2j+2}$ & $o^i_{2j+2}$ & $e^i_{2j+4}$ & $o^i_{2j+4}$  & $e^i_{2j+6}$ & $o^i_{2j+6}$ & $e^i_{2j+8}$ & $o^i_{2j+8}$ &
              \\
\hline 4& 2.554e-07 & - & 9.822e-06 & - & 3.348e-05 & - & 1.215e-04 & - & 3.385e-03 & - & 3.967e-04 & - & 1.391e-04 & - &  3.537e-05 & - &  3.653e-06  & - &2.922e-04
            \\
\hline 5& 6.463e-09 & 5.304 &6.978e-07 & 3.815 & 2.535e-06 & 3.723 &  9.835e-06 & 3.626 &  7.143e-03  & -1.077 &  7.813e-04  & -0.978& 3.027e-04 & -1.122 & 3.962e-05 & -0.164 &3.472e-08 & 6.717 &  2.702e-04
            \\
\hline 6& 8.849e-11 & 6.191 &4.599e-08 & 3.923 & 1.705e-07 & 3.895 &  6.737e-07 & 3.868 &  1.431e-02  & -1.002 &  4.338e-04  & 0.849& 3.422e-04 & -0.177 & 4.314e-06 & 3.199 &1.289e-10 & 8.074 & 3.941e-04
            \\
\hline 7& 9.164e-13 & 6.593 &2.947e-09 & 3.964 & 1.100e-08 & 3.954 &  4.375e-08 & 3.945 &  2.861e-02  & -1.000 &  7.272e-07  & 9.220& 1.426e-06 & 7.906 & 7.068e-09 & 9.254 &6.898e-13 & 7.545 &6.593e-04
            \\
\hline 8& 8.660e-15 & 6.725 &{\bf 1.864e-10} & {\bf 3.983} & 6.974e-10 & 3.979 &  2.783e-09 & 3.975 &  5.723e-02  & -1.000 &  2.233e-09  & 8.347& 6.168e-10 & 11.175 & {\bf 1.448e-10} &{\bf  5.610} &5.773e-15 & 6.901 &1.134e-03
            \\
\hline 9& 1.776e-15 & 2.285 &1.172e-11 & 3.992 & {\bf 4.389e-11} & {\bf 3.990} & {  1.754e-10} & { 3.988} &  1.142e-01  & -0.997 &  1.333e-10  & 4.067& {\bf 3.324e-11 }& {\bf 4.214}& 8.844e-12 & 4.033 &8.882e-16 & 2.700 &1.989e-03
            \\
\hline 10&1.887e-15 & -0.087 &7.431e-13 & 3.979 & { 2.748e-12} & {3.997} &  1.100e-11 & 3.994 &  2.419e-01  & -1.083 &  8.220e-12  & 4.019& 2.043e-12 & 4.024 & 5.383e-13 & 4.038 &8.105e-15 & -3.190 & 3.481e-03
	    \\
\hline
\end{tabular}
}
\caption{In this table we present a grid refinement analysis for the classical implementation of WENO-8 algorithm. The data has been obtained from the sampling of the function in (\ref{experimento2}), interpreting that this data comes from a cell average discretization of a piecewise continuous and unknown function. In this case, the discontinuity falls at some point in the middle of the cell that contains de discontinuity.}\label{tabla_ex2_3}

\vspace{0.5cm}

\resizebox{18cm}{!} {
\begin{tabular}{|c|c|c|c|c|c|c|c|c|c|c|c|c|c|c|c|c|c|c|c|c|c|c|c|c|c|c|c|c|c|}
\hline\multicolumn{1}{|c|}{ }&\multicolumn{2}{|c|}{$x_{2j-8}$}& \multicolumn{2}{|c|}{$x_{2j-6}$} & \multicolumn{2}{|c|}{$x_{2j-4}$}  & \multicolumn{2}{|c|}{$x_{2j-2}$}& \multicolumn{2}{|c|}{$x_{2j}$}& \multicolumn{2}{|c|}{$x_{2j+2}$}& \multicolumn{2}{|c|}{$x_{2j+4}$}&  \multicolumn{2}{|c|}{$x_{2j+6}$} &  \multicolumn{2}{|c|}{$x_{2j+8}$} &\multirow{ 2}{*}{Comp. t. (s)}
              \\
\cline{1-19} $i$ &$e^i_{2j-8}$ & $o^i_{2j-8}$ & $e^i_{2j-6}$ & $o^i_{2j-6}$ & $e^i_{2j-4}$ & $o^i_{2j-4}$ & $e^i_{2j-2}$ & $o^i_{2j-2}$ & $e^i_{2j}$ & $o^i_{2j}$ & $e^i_{2j+2}$ & $o^i_{2j+2}$ & $e^i_{2j+4}$ & $o^i_{2j+4}$  & $e^i_{2j+6}$ & $o^i_{2j+6}$ & $e^i_{2j+8}$ & $o^i_{2j+8}$ &
              \\
\hline 4& 3.031e-07 & - & 1.312e-07 & - & 1.183e-05 & - & 1.215e-04 & - & 3.385e-03 & - & 3.556e-04 & - & 1.142e-04 & - &  6.556e-05 & - &  3.445e-06  & - &2.847e-04
            \\
\hline 5& 6.649e-09 & 5.510 &6.568e-09 & 4.320 & 2.980e-07 & 5.310 &  9.835e-06 & 3.626 &  7.143e-03  & -1.077 &  5.354e-04  & -0.590& 2.033e-04 & -0.831 & 2.130e-05 & 1.622 &3.493e-08 & 6.624 & 3.148e-04
            \\
\hline 6& 8.904e-11 & 6.223 &1.459e-10 & 5.493 & 8.003e-09 & 5.219 &  6.737e-07 & 3.868 &  1.431e-02  & -1.002 &  1.688e-04  & 1.665& 1.162e-04 & 0.807 & 1.251e-08 & 10.734 &1.296e-10 & 8.074 &4.380e-04
            \\
\hline 7& 9.177e-13 & 6.600 &2.889e-12 & 5.658 & 2.283e-10 & 5.131 &  4.375e-08 & 3.945 &  2.861e-02  & -1.000 &  2.668e-07  & 9.305& 4.717e-10 & 17.910 & 3.977e-12 & 11.619 &6.907e-13 & 7.552&  7.414e-04
            \\
\hline 8&  8.660e-15 & 6.728 &{\bf 5.196e-14} & {\bf 5.797 }& 6.773e-12 & 5.075 &  2.783e-09 & 3.975 &  5.723e-02  & -1.000 &  2.201e-09  & 6.922& 9.401e-12 & 5.649 & {\bf 5.262e-14} & {\bf 6.240} &4.885e-15 & 7.143 &1.276e-03
            \\
\hline 9& 1.776e-15 & 2.285 &1.332e-15 & 5.285 & {\bf 2.052e-13} & {\bf 5.045} &  1.754e-10 & 3.988 &  1.144e-01  & -0.999 &  1.333e-10  & 4.046& {\bf 3.013e-13} & {\bf 4.963} & 2.665e-15 & 4.304 &8.882e-16 & 2.459 &2.231e-03
            \\
\hline 10&5.218e-15 & -1.555 &4.108e-15 & -3.209 & 1.665e-15 & 6.945 &  1.100e-11 & 3.994 &  2.414e-01  & -1.078 &  8.220e-12  & 4.019& 1.754e-14 & 4.102 & 5.440e-15 & -1.030 &9.992e-16 & -0.170  &4.160e-03
	    \\
\hline
\end{tabular}
}
\caption{In this table we present a grid refinement analysis for the new implementation of WENO-8 algorithm. The data has been obtained from the sampling of the function in (\ref{experimento2}), interpreting that this data comes from a cell average discretization of a piecewise continuous and unknown function. In this case, the discontinuity falls at some point in the middle of the cell that contains de discontinuity.}\label{tabla_ex2_4}


\vspace{0.5cm}

\resizebox{18cm}{!} {
\begin{tabular}{|c|c|c|c|c|c|c|c|c|c|c|c|c|c|c|c|c|c|c|c|c|c|c|c|c|c|c|c|c|c}
\hline\multicolumn{1}{|c|}{ }&\multicolumn{2}{|c|}{$x_{2j-10}$} &\multicolumn{2}{|c|}{$x_{2j-8}$}& \multicolumn{2}{|c|}{$x_{2j-6}$} & \multicolumn{2}{|c|}{$x_{2j-4}$}  & \multicolumn{2}{|c|}{$x_{2j-2}$}& \multicolumn{2}{|c|}{$x_{2j}$}& \multicolumn{2}{|c|}{$x_{2j+2}$}& \multicolumn{2}{|c|}{$x_{2j+4}$}&  \multicolumn{2}{|c|}{$x_{2j+6}$} & \multicolumn{2}{|c|}{$x_{2j+8}$}&\multirow{ 2}{*}{Comp. t. (s)}    
              \\
\cline{1-21} $i$ &$e^i_{2j-10}$ & $o^i_{2j-10}$ &$e^i_{2j-8}$ & $o^i_{2j-8}$ & $e^i_{2j-6}$ & $o^i_{2j-6}$ & $e^i_{2j-4}$ & $o^i_{2j-4}$ & $e^i_{2j-2}$ & $o^i_{2j-2}$ & $e^i_{2j}$ & $o^i_{2j}$ & $e^i_{2j+2}$ & $o^i_{2j+2}$ & $e^i_{2j+4}$ & $o^i_{2j+4}$  & $e^i_{2j+6}$ & $o^i_{2j+6}$  & $e^i_{2j+8}$ & $o^i_{2j+8}$& 
              \\
\hline 4& 1.750e-03 & - & 6.248e-07 & - & 2.681e-06 & - & 8.016e-06 & - & 3.495e-05 & - & 3.232e-03 & - & 3.247e-04 & - & 1.061e-04 & - &  3.936e-05 & - &  1.349e-03  & -& 2.943e-04
            \\
\hline 5&5.335e-11 & 24.967 & 1.689e-08 & 5.209 &6.826e-08 & 5.296 & 2.042e-07 & 5.295 &  8.979e-07 & 5.283 &  6.439e-03  & -0.994 &  5.766e-04  & -0.829& 1.997e-04 & -0.913 & 4.913e-05 & -0.320 &1.737e-06 & 9.602&   2.621e-04
            \\
\hline 6&{8.660e-14} & {9.267} & 4.631e-10 & 5.189 &1.849e-09 & 5.206 & 5.499e-09 & 5.215 &  2.409e-08 & 5.220 &  1.288e-02  & -1.000 &  7.635e-05  & 2.917& 1.511e-04 & 0.402 & 1.198e-05 & 2.036 &8.119e-08 & 4.419 &4.212e-04
            \\
\hline 7&{ 7.772e-16} & { 6.800} & {\bf 1.336e-11} & {\bf 5.115} &5.305e-11 & 5.124 & 1.569e-10 & 5.131 &  6.845e-10 & 5.137 &  2.575e-02  & -1.000 &  2.528e-09  & 14.882& 7.188e-08 & 11.038 & 5.539e-09 & 11.079 &1.383e-11 & 12.519& 7.151e-04
            \\
\hline 8&2.220e-16 & 1.807 & 3.990e-13 & 5.066 &{\bf 1.581e-12} & {\bf 5.068} & {\bf 4.665e-12} & {\bf 5.072} &  2.028e-11 & 5.077 &  5.151e-02  & -1.000 &  2.826e-11  & 6.483& 6.241e-12 & 13.491 & 2.192e-12 & 11.303 &5.662e-13 & 4.610&  1.247e-03
            \\
\hline 9&1.332e-15 & -2.585 & 1.421e-14 & 4.811 &4.574e-14 & 5.112 & 1.412e-13 & 5.046 &  6.202e-13 & 5.031 &  1.026e-01  & -0.994 &  9.044e-13  & 4.966& {\bf 2.054e-13} &{\bf  4.925} & {\bf 7.017e-14} & {\bf 4.965} &{1.688e-14} & { 5.068}&  2.203e-03
            \\
\hline 10&9.992e-15 & -2.907 & 1.887e-15 & 2.913 &4.108e-15 & 3.477 & 1.665e-15 & 6.406 &  1.799e-14 & 5.108 &  2.384e-01  & -1.217 &  2.942e-14  & 4.942& 1.754e-14 & 3.550 & 1.665e-15 & 5.397 &8.105e-15 & 1.058 &  3.819e-03
	    \\
\hline
\end{tabular}
}
\caption{In this table we present a grid refinement analysis for the classical implementation of WENO-10 algorithm. The data has been obtained from the sampling of the function in (\ref{experimento2}), interpreting that this data comes from a cell average discretization of a piecewise continuous and unknown function. In this case, the discontinuity falls at some point in the middle of the cell that contains de discontinuity.}\label{tabla_ex2_5}

\vspace{0.5cm}

\resizebox{18cm}{!} {
\begin{tabular}{|c|c|c|c|c|c|c|c|c|c|c|c|c|c|c|c|c|c|c|c|c|c|c|c|c|c|c|c|c|c}
\hline\multicolumn{1}{|c|}{ }&\multicolumn{2}{|c|}{$x_{2j-10}$} &\multicolumn{2}{|c|}{$x_{2j-8}$}& \multicolumn{2}{|c|}{$x_{2j-6}$} & \multicolumn{2}{|c|}{$x_{2j-4}$}  & \multicolumn{2}{|c|}{$x_{2j-2}$}& \multicolumn{2}{|c|}{$x_{2j}$}& \multicolumn{2}{|c|}{$x_{2j+2}$}& \multicolumn{2}{|c|}{$x_{2j+4}$}&  \multicolumn{2}{|c|}{$x_{2j+6}$} & \multicolumn{2}{|c|}{$x_{2j+8}$}&\multirow{ 2}{*}{Comp. t. (s)}   
              \\
\cline{1-21} $i$ &$e^i_{2j-10}$ & $o^i_{2j-10}$ &$e^i_{2j-8}$ & $o^i_{2j-8}$ & $e^i_{2j-6}$ & $o^i_{2j-6}$ & $e^i_{2j-4}$ & $o^i_{2j-4}$ & $e^i_{2j-2}$ & $o^i_{2j-2}$ & $e^i_{2j}$ & $o^i_{2j}$ & $e^i_{2j+2}$ & $o^i_{2j+2}$ & $e^i_{2j+4}$ & $o^i_{2j+4}$  & $e^i_{2j+6}$ & $o^i_{2j+6}$  & $e^i_{2j+8}$ & $o^i_{2j+8}$& 
              \\
 \hline 4& 1.750e-03 & - & 2.471e-08 & - & 1.451e-07 & - & 6.938e-07 & - & 3.495e-05 & - & 3.232e-03 & - & 2.486e-04 & - & 6.396e-05 & - &  2.118e-05 & - &  1.349e-03&-  &3.174e-04
            \\
\hline 5&5.266e-11 & 24.986 & 9.450e-12 & 11.353 &1.015e-09 & 7.160 & 2.153e-08 & 5.010 &  8.979e-07 & 5.283 &  6.439e-03  & -0.994 &  3.643e-04  & -0.551& 1.107e-04 & -0.792 & 1.322e-05 & 0.681 &1.331e-08 & 16.630 &  3.505e-04
            \\
\hline 6&{8.660e-14} & {9.248} & 1.049e-13 & 6.493 &6.783e-12 & 7.225 & 4.019e-10 & 5.744 &  2.409e-08 & 5.220 &  1.288e-02  & -1.000 &  1.987e-05  & 4.197& 1.584e-05 & 2.805 & {\bf 7.308e-08} & {\bf 7.498} &2.931e-13 & 15.470 &  5.034e-04
            \\
\hline 7&3.331e-16 & 8.022 & {\bf 6.661e-16} &{\bf  7.299} &4.707e-14 & 7.171 & 6.721e-12 & 5.902 &  6.845e-10 & 5.137 &  2.575e-02  & -1.000 &  1.453e-11  & 20.383& 4.761e-12 & 21.666 & 5.418e-14 & 20.363 &{9.992e-16} & { 8.196} &9.085e-04
            \\
\hline 8&2.220e-16 & 0.585 & 2.220e-16 & 1.585 &{\bf 4.441e-16} &{\bf  6.728} & {\bf 1.082e-13} & {\bf 5.956} &  2.028e-11 & 5.077 &  5.151e-02  & -1.000 &  2.827e-11  & -0.960& 8.815e-14 & 5.755 & 6.661e-16 & 6.346 &4.441e-16 & 1.170 &   1.669e-03
            \\
\hline 9&2.220e-15 & -3.322 & 3.553e-15 & -4.000 &4.441e-16 & 0.000 & 7.994e-15 & 3.759 &  6.202e-13 & 5.031 &  1.029e-01  & -0.998 &  9.044e-13  & 4.966& {\bf 6.661e-16 }& {\bf 7.048 }& 8.882e-16 & -0.415 &2.665e-15 & -2.585 &  2.987e-03
            \\
\hline 10&2.887e-15 & -0.379 & 1.887e-15 & 0.913 &2.998e-15 & -2.755 & 1.665e-15 & 2.263 &  1.799e-14 & 5.108 &  2.384e-01  & -1.212 &  2.942e-14  & 4.942& 3.331e-15 & -2.322 & 5.440e-15 & -2.615 &6.106e-15 & -1.196 &  5.400e-03
	    \\
\hline
\end{tabular}
}
\caption{In this table we present a grid refinement analysis for the new implementation of WENO-10 algorithm. The data has been obtained from the sampling of the function in (\ref{experimento2}), interpreting that this data comes from a cell average discretization of a piecewise continuous and unknown function. In this case, the discontinuity falls at some point in the middle of the cell that contains de discontinuity.}\label{tabla_ex2_6}
\end{sidewaystable}

\subsection{Multiresolution of 1D signals}
Now we will try to analyze the behavior of the new WENO-($2r-1$)  algorithm compared to the classical implementation for multiresolution of univariate piecewise continuous functions. In particular, we will present a compression application and we will analyze the error distribution around discontinuities attained by both algorithms. We are interested in checking if the new WENO-($2r-1$) algorithm manages to reduce the size of the absolute error close to discontinuities. Let us use initial data obtained through the function in (\ref{experimento2}) with an initial resolution of 2048 cells. It is clear that this function presents a cell that contains a discontinuity: if this cell is the central cell of the stencil, all the substencils will be affected by the discontinuity, so it will not be possible to attain an accuracy better than $O(1)$ for any of the algorithms that we are analyzing. In a multiresolution application, if we want to check the error distribution around the discontinuity, we can just keep one detail for each scale and discontinuity, in order to assure that the reconstruction error at the cell that contains the discontinuity will be zero. Table \ref{tabla_mult_2} shows the errors attained in the $L_1$, $L_2$ and $L_\infty$ norms for the classical  and the new implementation of WENO-($2r-1$)  for the particular cases $r=3, 4, 5$. We also show the computational time, in seconds, attained by each of the algorithms. In order to obtain this computational time we have obtained the average of 1000 executions of each of the algorithms. Figure \ref{exp2_mult} presents the error distributions around the discontinuity for each of the experiments presented in table \ref{tabla_mult_2}. We can see that in all the experiments shown, the new implementation of the algorithm obtains better results than the classical implementation. It is particularly interesting to observe in Figure \ref{exp2_mult} how the new algorithm manages to reduce the size of the error and, at the same time, compressing the error distribution towards the discontinuity. The computational time of both algorithms is similar.

\begin{table}[!ht]
\begin{center}
\resizebox{15cm}{!} {
\begin{tabular}{|c|c|c|c|c|c|c|c|c|c|c|c|c|c|c|}
\hline $i$ &WENO-6 & new WENO-6&WENO-8 & new WENO-8&WENO-10 & new WENO-10
              \\
\hline  $l_{\infty}$&2.9636e-05&1.7647e-05& 1.9434e-06&4.7964e-07&9.6089e-08& 2.8984e-09
            \\
\hline  $l_2$&6.037e-12  &1.6761e-12&2.7668e-14&1.3671e-15&5.8535e-17&9.6167e-21
            \\
\hline  $l_1$& 3.2931e-07 &1.5909e-07&2.3539e-08&4.7547e-09&1.0135e-09&7.7522e-12
            \\
\hline  Comp. time (s)& 6.325e-03&7.987e-03 &8.645e-03&9.103e-03&8.986e-03&1.154e-02
            \\
\hline
\end{tabular}
}
\caption{Norms of the error and computational time (in seconds) obtained when compressing the function in (\ref{experimento2}) with four scales of multiresolution and keeping 4 details (one for scale). The initial resolution is 2048 cells.}\label{tabla_mult_2}
\end{center}
\end{table}

\begin{figure}[!ht]
\centerline{\psfig{figure=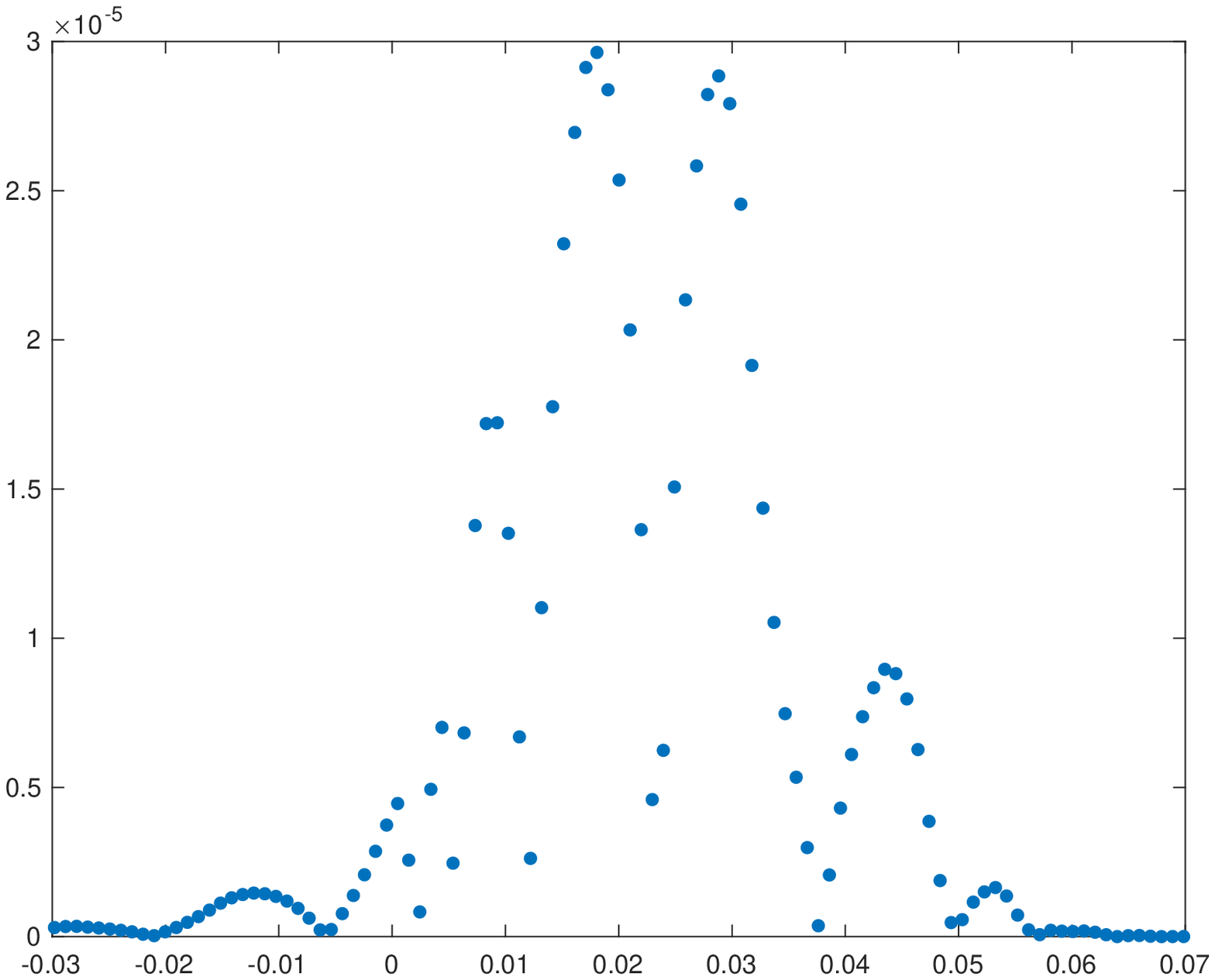,height=6cm}\\
\psfig{figure=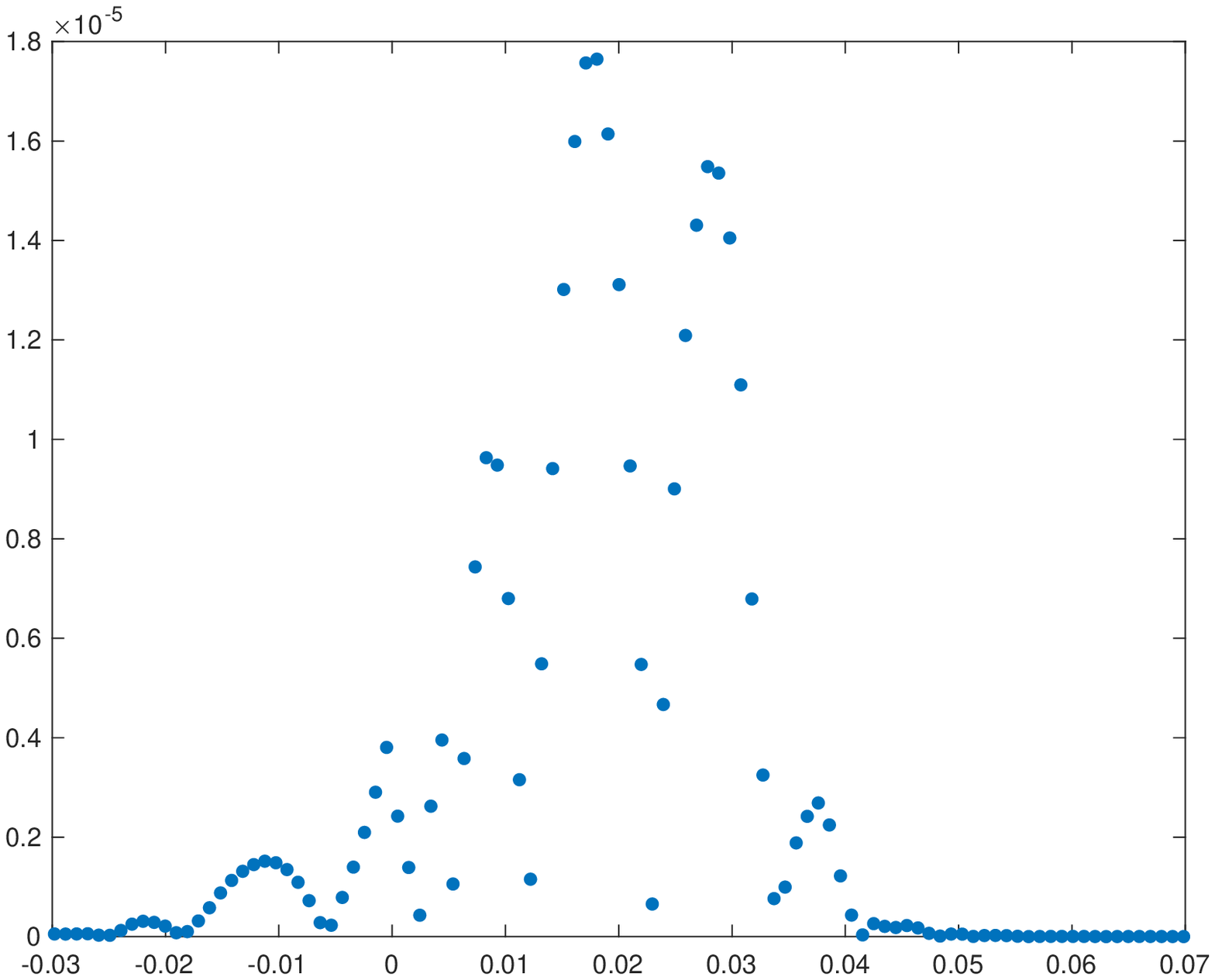,height=6cm}
}
\centerline{\psfig{figure=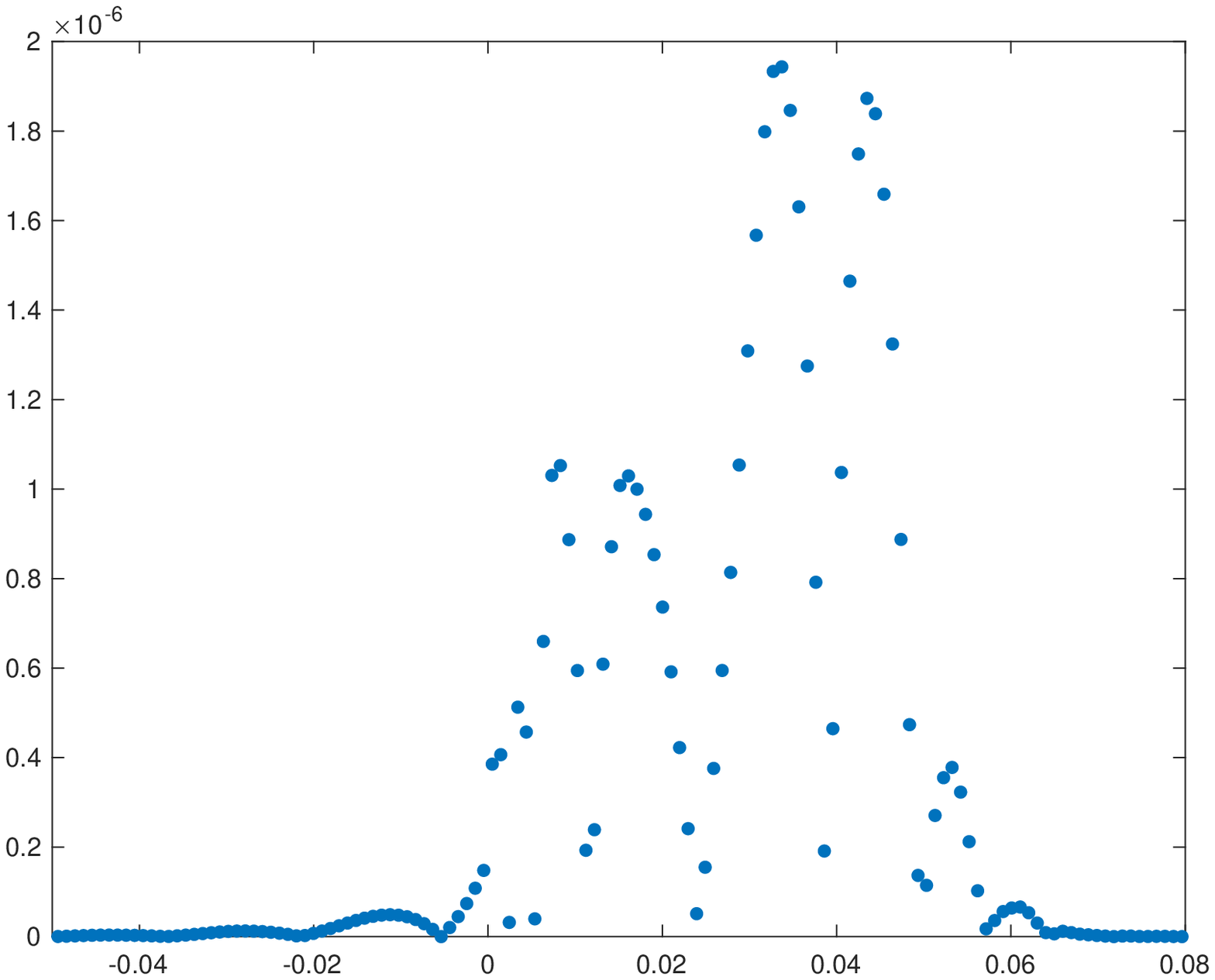,height=6cm}\\
\psfig{figure=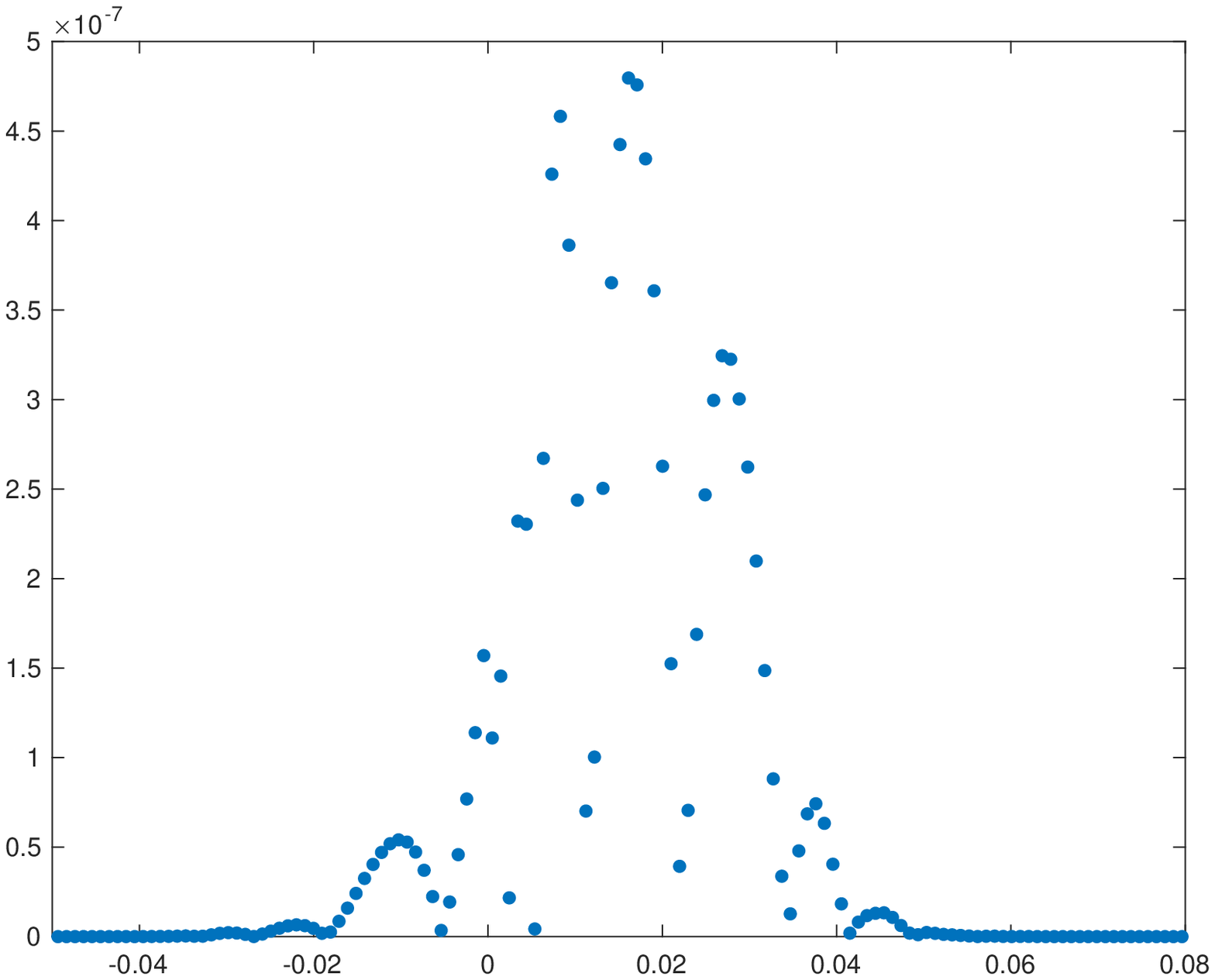,height=6cm}
}
\centerline{\psfig{figure=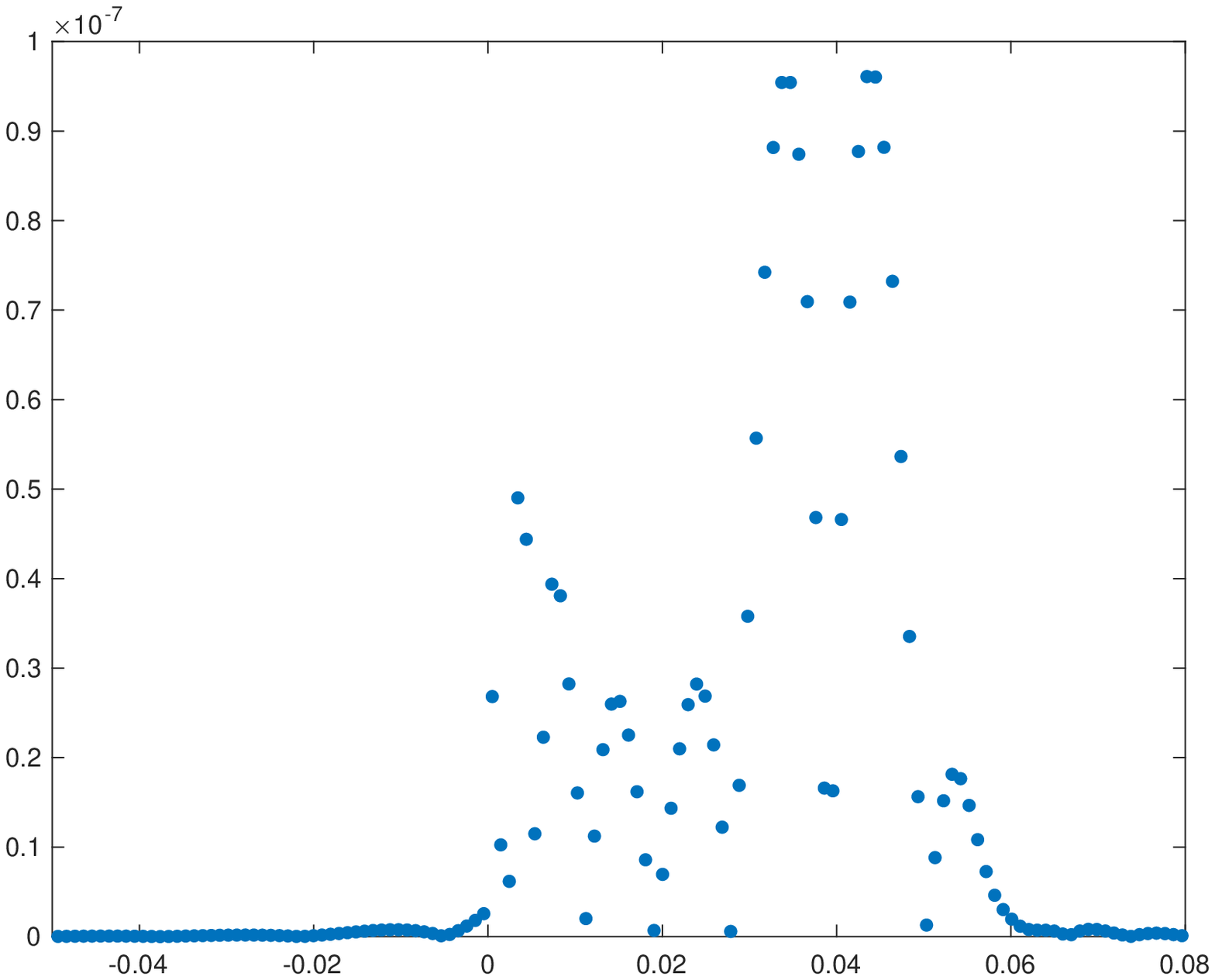,height=6cm}\\
\psfig{figure=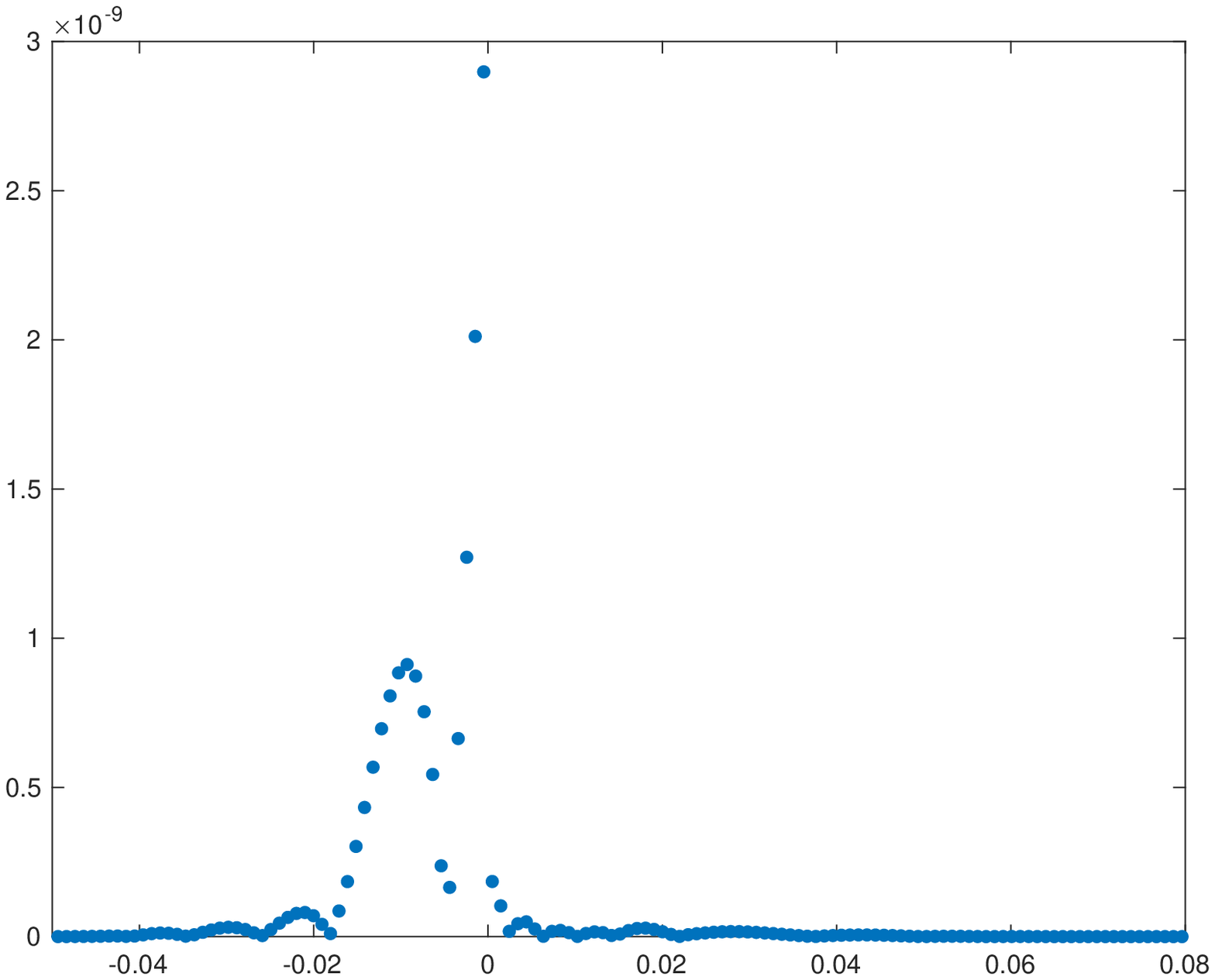,height=6cm}
}

\caption{Error distribution for the function in (\ref{experimento2}) around the discontinuity. The function has been compressed with four scales of multiresolution keeping 4 details (one for scale). }\label{exp2_mult}
\end{figure}

\subsection{Multiresolution of bivariate functions}
The aim of this section is to check how the new and the classical implementation of the WENO-($2r-1$)  algorithm perform in a two-dimensional multiresolution application. We will extend the multiresolution process to two dimensions using tensor product \cite{AD99}. This means that we will use the unidimensional algorithm for processing the rows and the columns of the two-dimensional matrix where the data is stored. Our intention is to check if the new implementation of the WENO-($2r-1$)  algorithm performs better that the classical implementation close to the discontinuities. As it can be seen in previous sections in Tables \ref{tabla_ex1_1} to \ref{tabla_ex2_6}, the size of the error is very small at the cells that are contiguous to the cell that contains the discontinuity, where both algorithms loss their accuracy and attain an error of the size of the discontinuity. Due to this fact, if we want to observe the errors obtained at he mentioned cells, we need to keep one detail for each cell that contains a discontinuity and for each scale of the multiresolution process. We have chosen the bivariate function presented in (\ref{experimento3}), that contains only vertical edges. The reason is that tensor product multiresolution results in great details for oblique or curved edges, that would hide the ones produced directly by any implementation of WENO-($2r-1$)  algorithm close to the discontinuities, as the latter are much smaller. Thus, as we have done for the one dimensional experiments, in order to be able to show the size of the errors close to the discontinuities, we will keep at each scale of multiresolution the details associated to cells that contains a discontinuity. In the next experiment we start from initial data that has $1024\times1024$ cells and we have three vertical discontinuities at every multiresolution scale. Thus, with four multiresolution scales we will keep 2880 details that account for $3\times512$ details for the first step of multiresolution, $3\times256$ for the second, $3\times128$ for the third and $3\times64$ for the fourth.
{\it\bf Example 3} Let's consider the following bivariate function, that has been represented in Figure \ref{f3},
\begin{equation}\label{experimento3}
\scalemath{0.9}{
f(x,y)=\left\{\begin{array}{ll}
3+e^{xy}, & \textrm{ if } -1\leq x<-0.5, -1\leq y<1,\\
3, & \textrm{ if } x=-0.5,\\
1+\cos(2\pi xy), & \textrm{ if } -0.5< x\leq0, -1\leq y<1,\\
1, & \textrm{ if } x=0,\\
\sin(2\pi xy), & \textrm{ if } 0< x\leq.5, -1\leq y<1,\\
-2, & \textrm{ if } x=0.5,\\
-4+\sin(2\pi xy), & \textrm{ if } 0.5< x\leq1, -1\leq y<1.
\end{array}\right.
}
\end{equation}

\begin{figure}[H]
\centerline{\psfig{figure=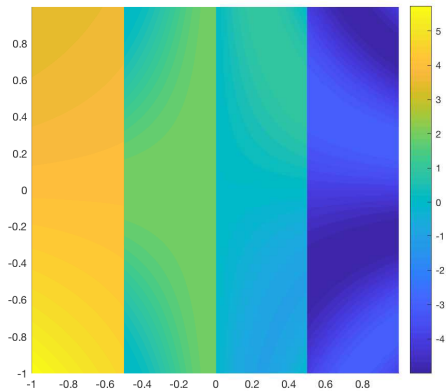,height=6cm}
}
\caption{Bivariate function in (\ref{experimento3}).}\label{f3}
\end{figure}
As we have mentioned before, this function has been initially sampled with $1024\times1024$ cells and we perform 4 levels of multiresolution. The norms of the errors obtained by the classical and the new implementation of the WENO-($2r-1$)  algorithms are shown in Table \ref{tabla_mult_3}. In Figure \ref{figexp3} to the left we show the error distribution that we have obtained using the classical implementation of WENO-($2r-1$)  algorithm and to the right, the error distribution obtained using the new implementation of the WENO-($2r-1$)  algorithm. The conclusion that can be obtained is that, for this application, the new implementation of the WENO-($2r-1$)  algorithm manages to obtain smaller errors around the discontinuity than the classical implementation. We can also observe that, the new implementation compresses the error distribution towards the discontinuity. The computational cost has been obtained performing 100 executions of the algorithms and obtaining the mean of the computational times of all the executions. We can see that the computational cost is similar for both algorithms.
\begin{table}[!ht]
\begin{center}
\resizebox{15cm}{!} {
\begin{tabular}{|c|c|c|c|c|c|c|c|c|c|c|c|c|c|c|}
\hline $i$ &WENO-6 & new WENO-6&WENO-8 & new WENO-8&WENO-10 & new WENO-10
              \\
\hline  $l_{\infty}$&3.4437e-02   &3.1458e-02& 1.0015e-02&7.0379e-03&7.1835e-03&4.1436e-03
            \\
\hline  $l_2$& 1.4241e-06&9.3078e-07&2.5654e-07&1.0417e-07&7.0688e-08&1.4339e-08
            \\
\hline  $l_1$& 2.0936e-04&1.5995e-04&8.6709e-05&4.6845e-05&3.4714e-05&1.2318e-05
            \\
\hline  Comp. time (s)& 4.519&5.026&5.977&6.552&6.653&8.105
            \\
\hline
\end{tabular}
}\caption{Norms of the error and computational time (in seconds) obtained when compressing the function in (\ref{experimento3}) with four scales of multiresolution and keeping $960$ details. The initial resolution is $1024\times1024$ cells.}\label{tabla_mult_3}
\end{center}
\end{table}

\begin{figure}[!ht]
\centerline{\psfig{figure=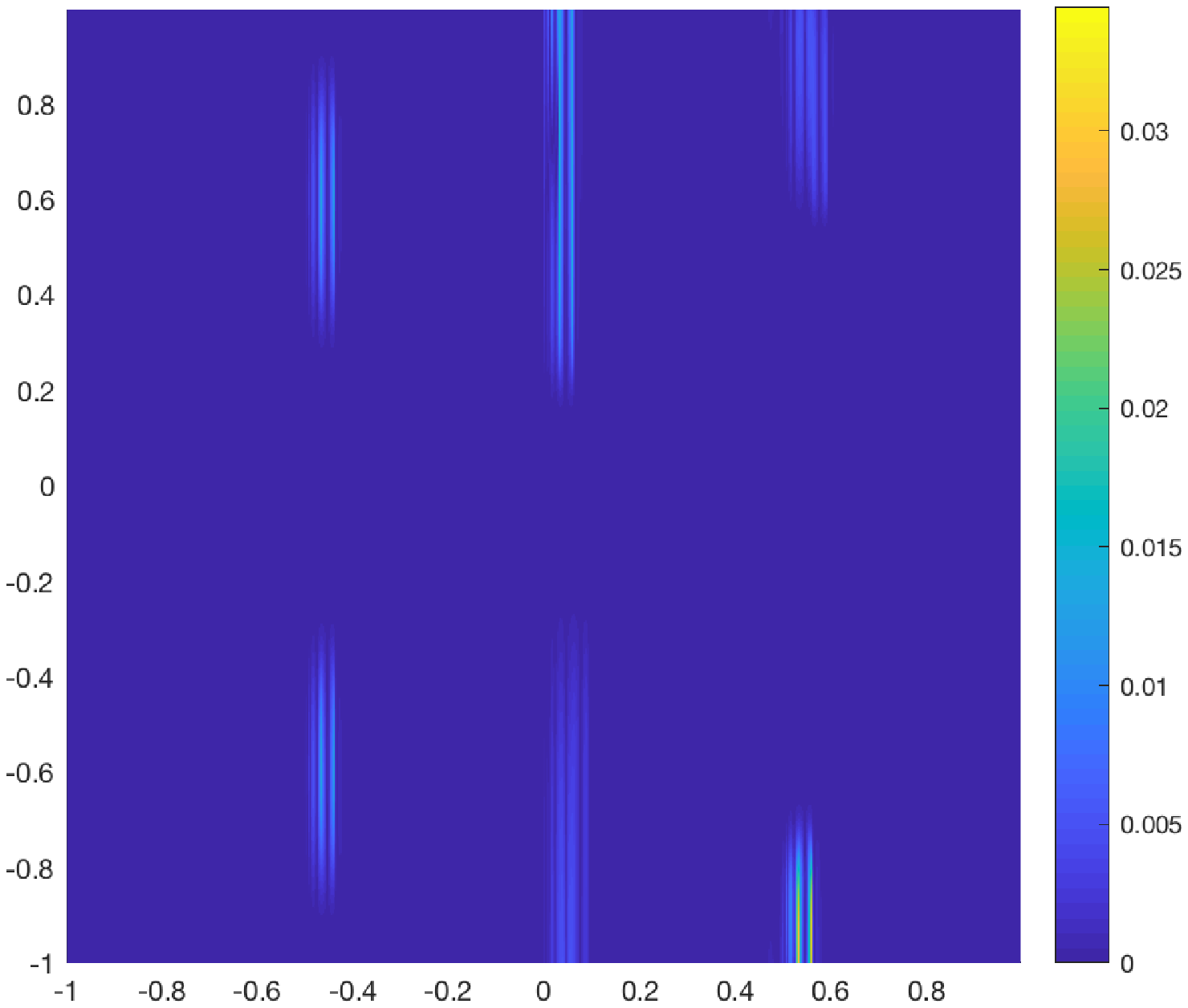,height=6cm}\\
\psfig{figure=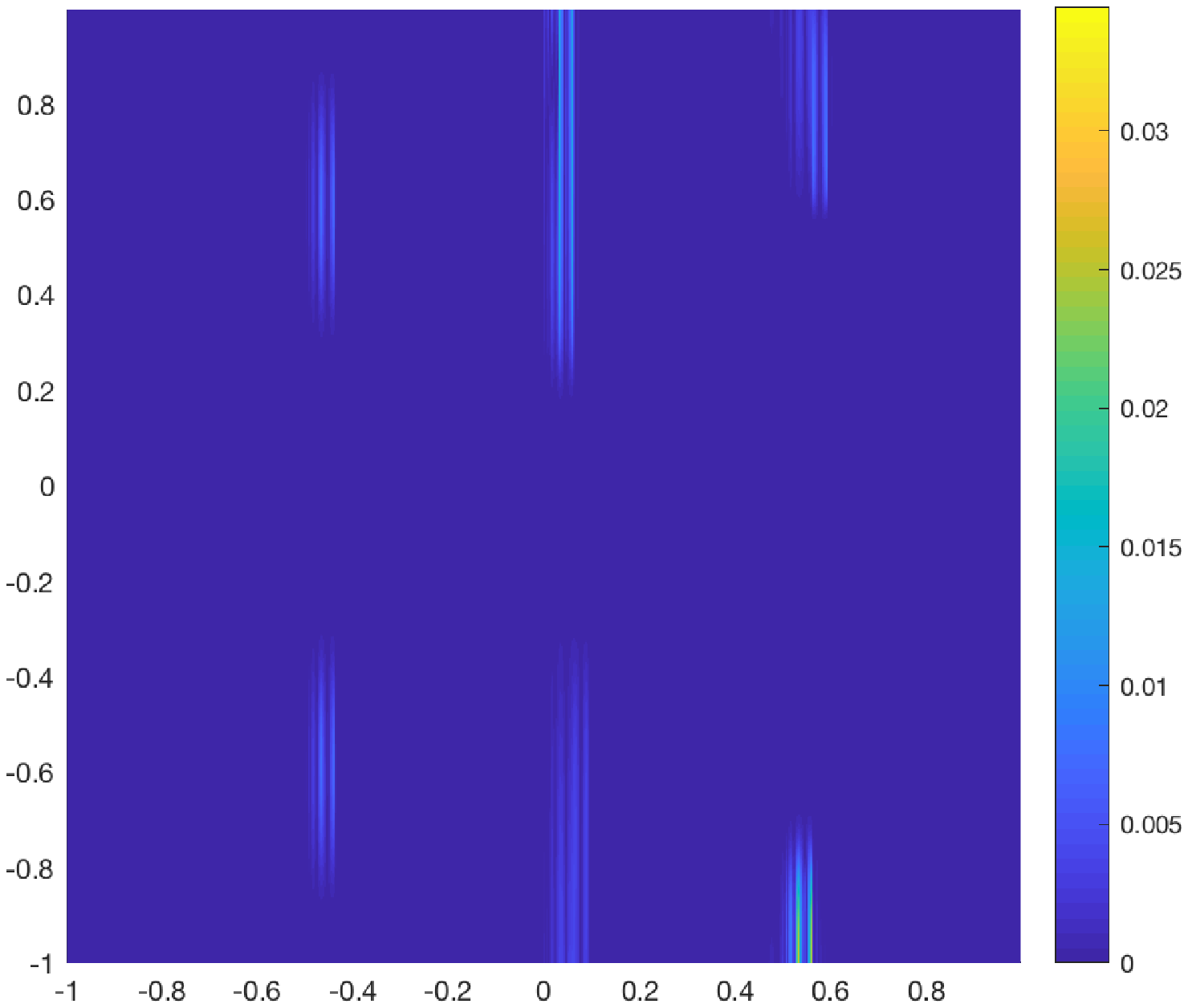,height=6cm}
}
\centerline{\psfig{figure=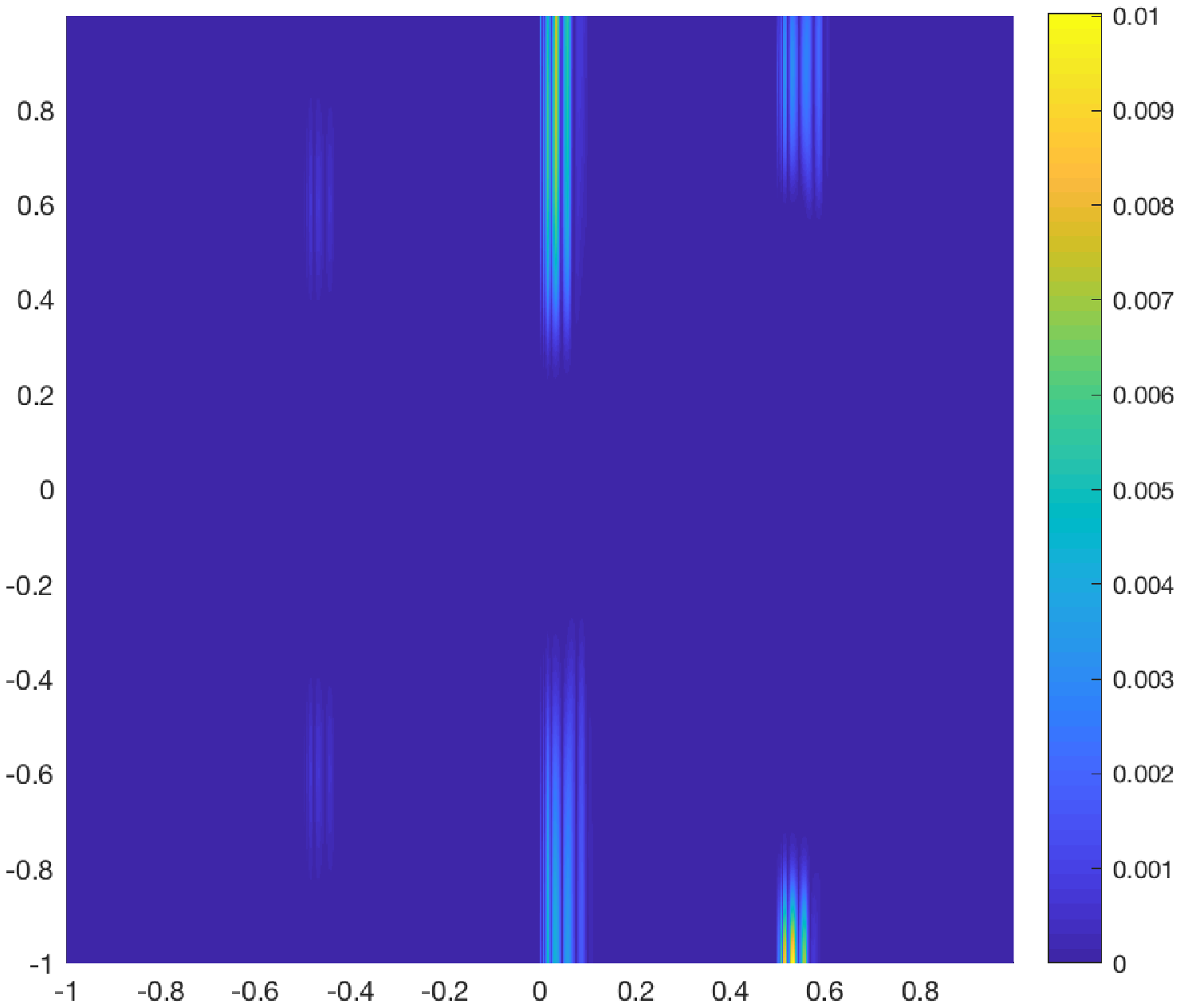,height=6cm}\\
\psfig{figure=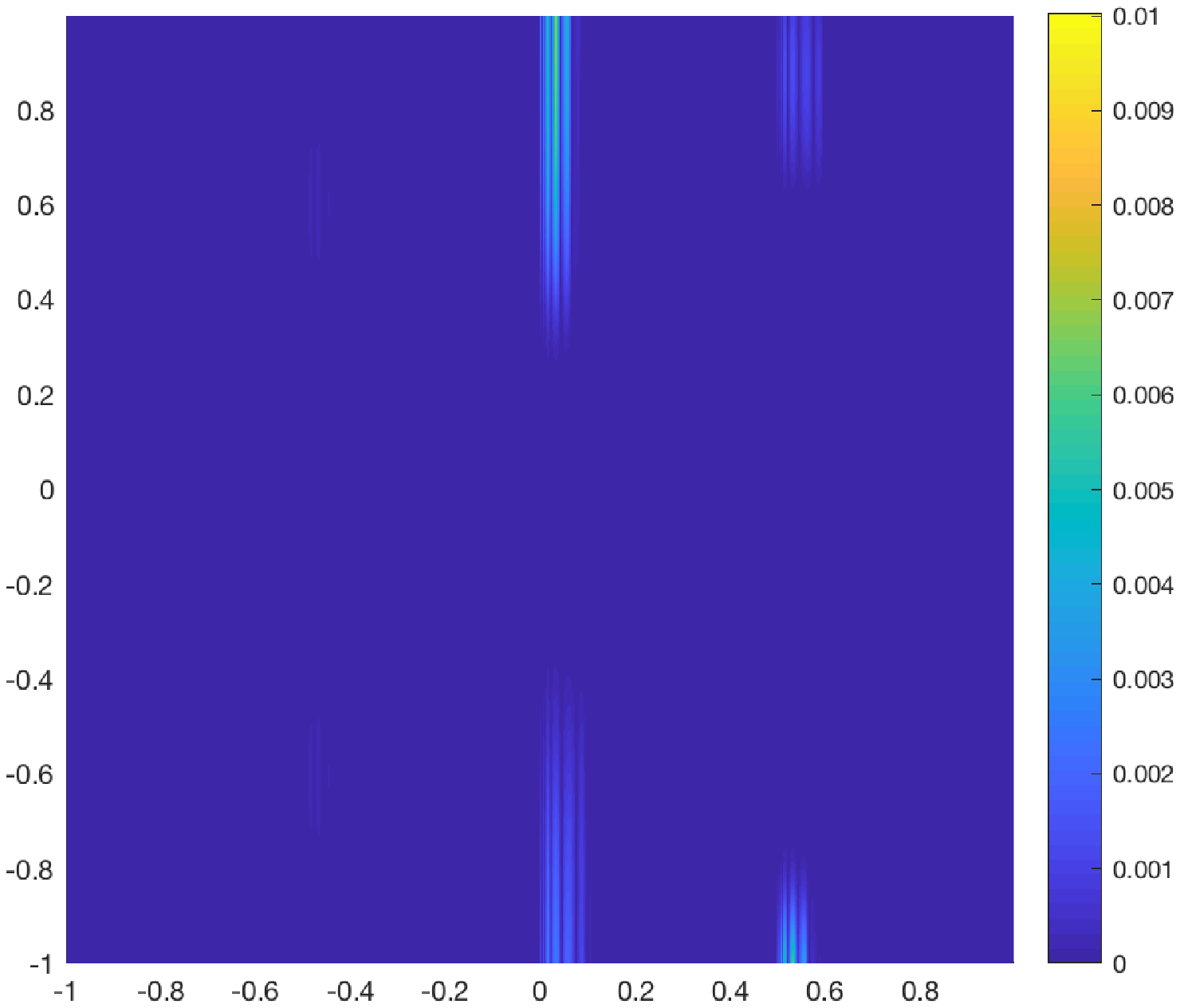,height=6cm}
}
\centerline{\psfig{figure=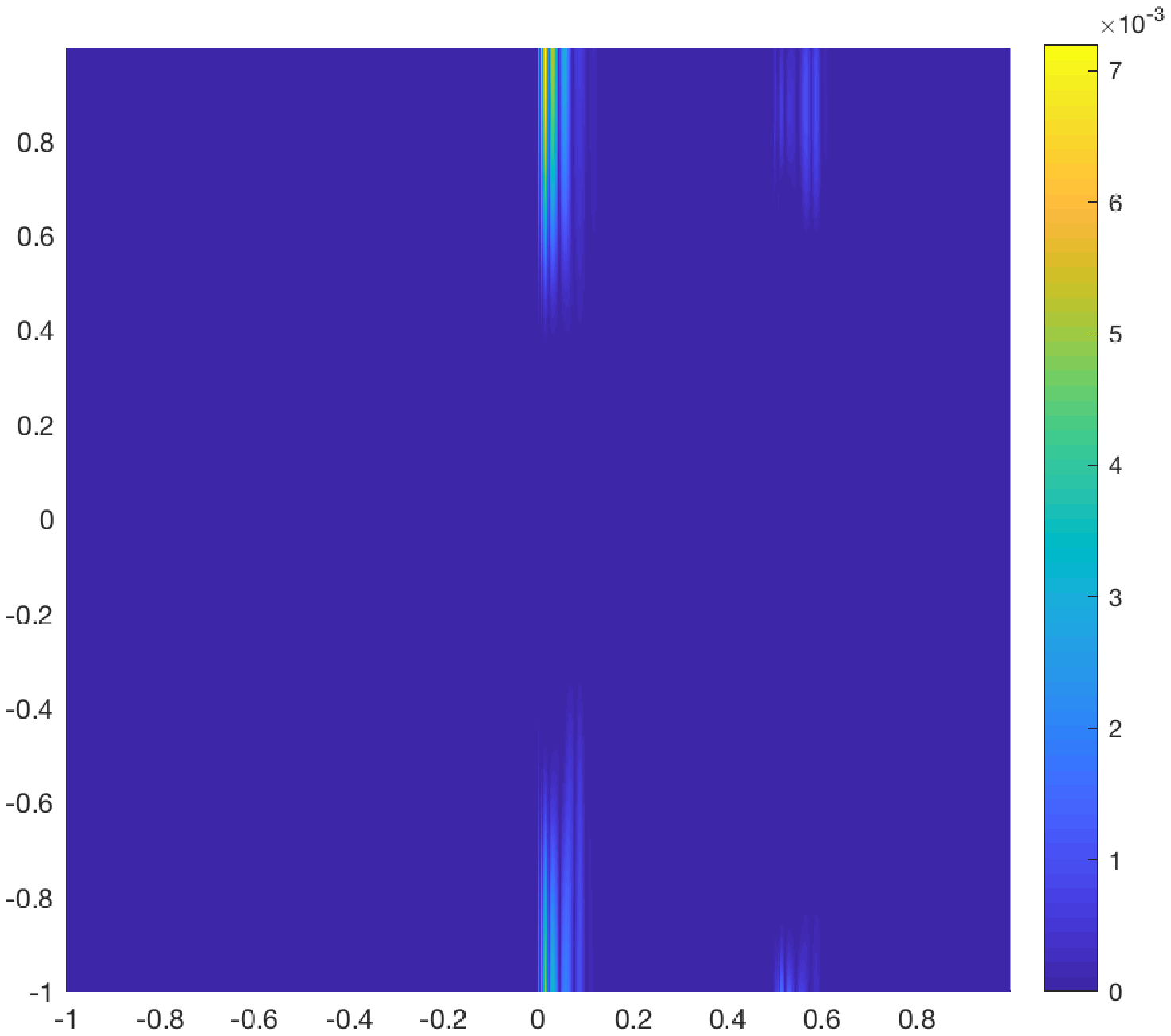,height=6cm}\\
\psfig{figure=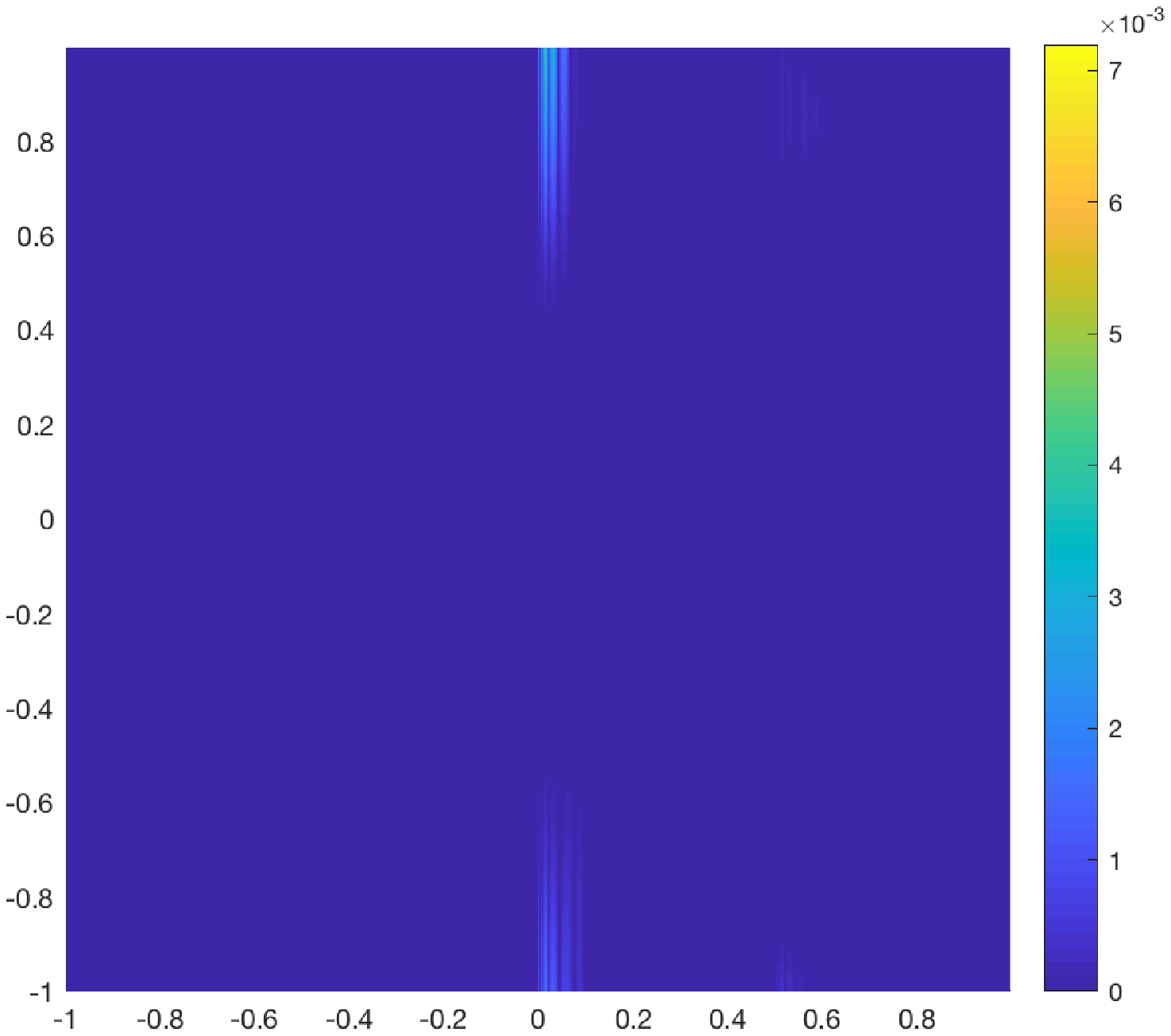,height=6cm}
}
\caption{Distribution of the error obtained when compressing the bivariate function in (\ref{experimento3}) using 4 levels of multiresolution and keeping $960$ details. The initial resolution is $1024\times1024$ cells.}\label{figexp3}
\end{figure}


\section{Conclusions}\label{conclusions}
In \cite{cellwenoamatruizshu,WENO_nuevo,generalizacion} the authors presented a new algorithm that improves the performance of WENO-6 algorithm close to the discontinuities. In \cite{articulopaper} we extended the strategy for any WENO-($2r-1$)  algorithm for data discretized in the point values. In this article we have extended the strategy for data discretized in the cell averages. The new implementation of the WENO-($2r-1$)  algorithm behaves theoretically and numerically in the way designed, so that the order of accuracy decreases progre\-ssively towards the discontinuity.  The numerical experiments presented allow to confirm this fact. Also, we have shown that the new implementation of the algorithm is suitable for multiresolution applications of univariate and bivariate functions, improving the results attained by the classical implementation of WENO-($2r-1$)  algorithm.


%
\bibliographystyle{model5-names}
\bibliography{bibliografia_bibdesk_no_doi_url}
\pagebreak
\end{document}